\newtheorem{theorem}{Theorem}[section]
\newtheorem{lemma}[theorem]{Lemma}
\newtheorem{proposition}[theorem]{Proposition}
\newtheorem{corollary}[theorem]{Corollary}
\theoremstyle{definition}
\newtheorem{definition}[theorem]{Definition}
\newtheorem{observation}[theorem]{Observation}
\newtheorem{example}[theorem]{Example}
\newtheorem{conjecture}[theorem]{Conjecture}
\newtheorem{remark}[theorem]{Remark}
\newtheorem{general remarks}[theorem]{General remarks}
\newcommand{\Id}{\operatorname{Id}}
\renewcommand{\span}{\operatorname{span}}
\newcommand{\ben}{\begin{enumerate}}
\newcommand{\een}{\end{enumerate}}
\begin{document}

\title[On group gradings on PI-algebras]
{On group gradings on PI-algebras}

\author{Eli Aljadeff}
\thanks{The first author was supported by THE ISRAEL SCIENCE FOUNDATION (grant No. 1017/12).}

\address[Eli Aljadeff]{Department of Mathematics, Technion-Israel Institute of
Technology, Haifa 32000, Israel}
\email{aljadeff@tx.technion.ac.il}

\author{Ofir David}
\address[Ofir David]{Department of Mathematics, Technion-Israel Institute of
Technology, Haifa 32000, Israel}
\email{ofirdav@tx.technion.ac.il}

\subjclass[2000]{}

\keywords{graded algebra, polynomial identity, codimension growth}

\begin{abstract}

We show that there exists a constant $K$ such that for any
PI-algebra $W$ and any nondegenerate $G$-grading on $W$ where $G$
is any group (possibly infinite), there exists an abelian subgroup
$U$ of $G$ with \\ $[G:U] \leq \exp(W)^{K}$. A $G$-grading
$W=\oplus_{g \in G} W_{g}$ is said to be nondegenerate if
$W_{g_1}W_{g_2}\cdots W_{g_r} \neq 0$ for any $r \geq
1$ and any $r$ tuple $(g_1, g_2, \ldots, g_r)$ in
$G^{r}$.

\end{abstract}

\maketitle

\bigskip
\noindent

\bigskip\bigskip

\hspace{3cm}

\begin{section}{Introduction} \label{Introduction}

In the last two decades there were significant efforts to extend
important results in the theory of polynomial identities for
(ordinary) associative algebras to $G$-graded algebras, where $G$
is a \textit{finite group}, and more generally to $H$-comodule
algebras where $H$ is a finite dimensional Hopf algebra. For
instance Kemer's representability theorem and the solution of the
Specht problem were established for $G$-graded associative
algebras over a field of characteristic zero (see \cite{AB},
\cite{Svi}). Recall that Kemer's representability theorem says
that any associative PI-algebra over a field $F$ of characteristic
zero is PI-equivalent to the Grassmann envelope of a finite
dimensional $\mathbb{Z}_{2}$-graded algebra $A$ over some field
extension $L$ of $F$ (see below the precise statement and
Proposition \ref{reduction from arbitrary to finite dimensional
algebras}). Another instance of these efforts is the proof of
Amitsur's conjecture which was originally proved for ungraded
associative algebras over $F$ by Giambruno and Zaicev \cite{GZ},
and was extended to the context of $G$-graded algebras by
Giambruno, La Mattina and the first named author of this article
(see \cite{AGL}, \cite{GiaLaMattina}) and considerable more
generally for $H$-comodule algebras by Gordienko \cite{Gor}.
Amitsur's conjecture states that the sequence $c_{n}^{1/n}$, where
$c_{n}=c_{n}(W)$ is the $n$th term of the codimension sequence of
$W$, has an integer limit (denoted by $\exp(W)$).

In \cite{Al-David} a different point of view was considered (in
combining PI-theory and $G$-gradings, still under the condition
that $G$ is finite), namely asymptotic PI-theory was applied in
order to prove invariance of the order of the grading group on an
associative algebra whenever the grading is minimal regular (as
conjectured by Bahturin and Regev \cite{BaRe}). In fact, it is
shown there that the order of the grading group coincides with
$\exp(W)$.

Suppose now $G$ is arbitrary (i.e. not necessarily finite). Our
goal in this paper, roughly speaking, is to exploit the invariant
$\exp(W)$ of the algebra $W$, in order to put an effective bound
on the minimal index of an abelian subgroup of $G$ whenever the
algebra $W$ admits a $G$-gradings satisfying a natural condition
which we call ``nondegenerate'' (see Definition \ref{nondegenerate
definition}). Our results extend considerable known results for PI
group algebras (which are obviously nondegenerately $G$-graded).
Let us remark here that a big part of our analysis is devoted to
the case where the group $G$ is finite (a case where Kemer and
asymptotic PI theory can be applied) and then we pass to infinite
groups.

In this paper we only consider fields of characteristic
zero. Let $W$ be an associative PI-algebra over a field $F$.
Suppose $W\cong \bigoplus_{g \in G}W_{g}$ is $G$-graded where $G$
is arbitrary.

\begin{definition} \label{nondegenerate definition}

We say the $G$-grading on $W$ is \textit{nondegenerate} if for any positive integer $r$ and
any tuple $(g_{1},\ldots, g_{r}) \in G^{(r)}$, we have $W_{g_{1}}W_{g_{2}}\cdots W_{g_{r}}\neq 0$.
\end{definition}

\begin{theorem} \label{thm:Main_Theorem}$(Main$ $theorem)$

There exists an integer $K$ such that for any PI-algebra $W$ and
for any "nondegenerate'' $G$-grading on $W$ by any group $G$,
there exists an abelian subgroup $U$ of $G$ with $[G:U] \leq \exp(W)^{K}$.

\end{theorem}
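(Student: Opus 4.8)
The plan is to settle the theorem first for \emph{finite} $G$, which is the heart of the matter, and then to remove the finiteness assumption by a localization argument. The starting point is the observation that nondegeneracy is a property of the graded $T$-ideal: the condition $W_{g_1}\cdots W_{g_r}\ne 0$ for every tuple says precisely that no nonzero graded monomial is a graded identity of $W$. Hence nondegeneracy (like $\exp(W)$, which depends only on the codimension sequence) is preserved under graded PI-equivalence. Invoking the graded form of Kemer's representability theorem (\cite{AB},\cite{Svi}) together with the reduction to finite-dimensional algebras quoted in the introduction, I may therefore assume, when $G$ is finite, that $W$ is a \emph{finite-dimensional} $G$-graded algebra over an algebraically closed field; in particular each $W_g\ne 0$, so $|G|\le\dim W$.

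Next I would exploit the finite-dimensional graded structure. Writing $J$ for the graded Jacobson radical, $W/J$ is a product of graded-simple algebras, each (by the Bahturin--Sehgal--Zaicev classification) a matrix algebra over a twisted group algebra $M_n(\bar F^{\sigma}[H])$ for some subgroup $H\le G$ and cocycle $\sigma$, carrying a combination of an elementary and a fine grading. By the Giambruno--Zaicev description of the exponent (\cite{GZ}) in its graded form, $\exp(W)$ equals the largest dimension of a sum $B_1\oplus\cdots\oplus B_s$ of graded-simple components admitting a nonzero product $B_1 J B_2 J\cdots J B_s$. The role of nondegeneracy is to force $G$ to be \emph{captured} by this structure: since distinct semisimple blocks annihilate one another, a product $W_{g_1}\cdots W_{g_r}$ ranging over all of $G$ can survive only if, up to radical corrections, the support of $G$ is realized inside a single graded-simple component. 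This pins $H$ to bounded index in $G$ and makes the matrix sizes $n$, equivalently the irreducible (projective) degrees attached to $\bar F^{\sigma}[H]$, controlled by $\exp(W)$.

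It then remains to pass from ``all irreducible degrees of $H$ are bounded in terms of $\exp(W)$'' to ``$H$, and hence $G$, has an abelian subgroup of index at most $\exp(W)^{K}$.'' This is the content of a Jordan / Isaacs--Passman-type theorem, and here lies the \textbf{main obstacle}: the classical statements yield only a \emph{finite} bound on the index, whereas the theorem demands a \emph{polynomial} bound $\exp(W)^{K}$ with a single universal exponent $K$. Overcoming this requires a sharp, polynomial form of the bounded-degree theorem together with tight bookkeeping of how the structural reduction inflates the index, and I expect this to be the most delicate step. As a sanity check, for the stronger notion of a \emph{regular} grading one even has $\exp(W)=|G|$ (\cite{Al-David},\cite{BaRe}), which the bound must accommodate.

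Finally I would drop the finiteness hypothesis on $G$. For each finitely generated subgroup $H\le G$ the subalgebra $W_H=\bigoplus_{h\in H}W_h$ inherits a nondegenerate $H$-grading and satisfies $\exp(W_H)\le\exp(W)$, so the finite-group analysis (applied after coarsening to finite quotients of $H$, on which $W_H$ remains nondegenerately graded with unchanged exponent) furnishes an abelian subgroup of index at most $m:=\exp(W)^{K}$ in each $H$. To glue these local solutions into a global one I would encode the assertion ``$G$ has an abelian subgroup of index $\le m$'' as the existence of a homomorphism $G\to S_m$ whose point stabilizer is abelian; since the set of such homomorphisms on each finitely generated subgroup is finite, an inverse-limit (compactness) argument over the directed system of finitely generated subgroups yields the required homomorphism on $G$, and with it an abelian $U\le G$ satisfying $[G:U]\le\exp(W)^{K}$. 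I note that the assembly must proceed over \emph{subgroups} rather than quotients, since abelianness is inherited by images but not by preimages; keeping the index uniform across this passage is the secondary technical point to watch.
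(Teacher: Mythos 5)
Your overall architecture matches the paper's (Kemer reduction to a finite-dimensional algebra, passage to a graded-simple component via Bahturin--Sehgal--Zaicev, reduction to a group-algebra-type object, then a finitely-generated/compactness passage for infinite $G$), but the argument does not close: the step you yourself flag as the ``main obstacle'' is precisely the external input the theorem stands on, and you leave it unresolved. The needed statement is Gluck's theorem \cite{Glu}: there is an absolute constant $m$ such that every finite group $G$ has an abelian subgroup of index at most $b(G)^{m}$, where $b(G)$ is the largest irreducible character degree. This is proved using the classification of finite simple groups; it is not a refinement of Jordan or Isaacs--Passman that one can expect to rederive by ``tight bookkeeping,'' and without it you only obtain ``abelian-by-finite'' with no universal exponent $K$. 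Combined with $\exp(FG)=b(G)^{2}$ this gives $[G:U]\leq \exp(FG)^{m/2}$, and the paper's structural reductions are organized exactly so as to feed into it: rather than bounding the \emph{projective} degrees of the subgroup $H$ (which, as the paper's $A_4$ example shows, does not bound the ordinary degrees), the paper embeds $FG$ as a $G$-graded algebra into $F^{\alpha}H\otimes M_k(F)\otimes M_d(F)$, where $M_d(F)$ carries an ungraded representation of $F^{\alpha}H$, obtaining $\exp(FG)\leq \exp(A)^{2}\leq \exp(W)^{2}$ and then applying Gluck to $G$ itself. Your sketch of how nondegeneracy interacts with the graded-simple decomposition also needs the precise criterion (every right $H$-coset is represented in the tuple of the elementary grading), but that part is recoverable; the missing Gluck input is not.

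A second, smaller gap sits in the passage to finitely generated groups. You propose to apply the finite case to finite quotients of a finitely generated subgroup $H$ and then recover an abelian subgroup of $H$ of the same index. This recovery (via Hall's theorem on the finitely many subgroups of index $\leq \exp(W)^{K}$) works only if $H$ is residually finite, and a general finitely generated group need not have any finite quotients at all. The paper closes this by observing that nondegeneracy plus a multilinear identity forces $G$ to be $n$-permutable (evaluate the identity on homogeneous variables and compare the homogeneous component of degree $g_1\cdots g_n$), whence a finitely generated $G$ is abelian-by-finite by Curzio--Longobardi--Maj--Robinson and therefore residually finite. Your final inverse-limit step over finitely generated subgroups is a legitimate alternative to the paper's ``good pairs'' argument, but it cannot be reached until the two gaps above are filled.
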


It is known (and not difficult to prove; see Lemma \ref{characteristic subgroup}) that if a
group $G$ has an abelian subgroup of index $n$, then it contains a
characteristic abelian subgroup whose index is bounded by a
function of $n$. We therefore have the following corollary.

\begin{corollary}

There exists a function $f:\mathbb{N}\rightarrow \mathbb{N}$ such that for any PI-algebra $W$ and for any "nondegenerate'' $G$-grading on $W$ by any group $G$,
there exists a characteristic abelian subgroup $U$ of $G$ with $[G:U] \leq f(\exp(W))$.

\end{corollary}

In order to put our main result in an ``appropriate'' context, we
recall $(i)$ different type of $G$-gradings on associative
algebras $(ii)$ three conditions on groups which are closely
related to the content of the main theorem, namely
$n$-permutability, $n$-rewritability and $PI_{n}$ (the group
algebra $FG$ satisfies a polynomial identity of degree $n$, $char(F)=0$).

A $G$-grading on $W\neq 0$ is called \textit{strong }if
$W_{g}W_{h}=W_{gh}$ for every $g,h \in G$. Note that this
condition is considerable stronger than a nondegenerate grading.
For instance, the well known $\mathbb{Z}_{2}$-grading on the
infinite dimensional Grassmann algebra is nondegenerate but not
strong. The fact that the $\mathbb{Z}_{2}$-grading on the
Grassmann algebra is nondegenerate will play an important role in
the proof of the main theorem. Strong grading is considerably
weaker than \textit{crossed product grading} which requires that
every homogeneous component has an invertible element (e.g. group
algebras). In the other direction we may consider conditions on
$G$-gradings which are weaker than nondegenerate $G$-gradings as
$G$-gradings where $W_{g}\neq 0$ for every $g \in G$ (call it
\textit{connected grading}). A somewhat stronger condition to the
latter but yet weaker than nondegenerate grading is a condition
which we call bounded nondegenerate: by definition a $G$-grading
on an algebra $W$ is \textit{bounded nondegenerate} if any product
of homogeneous components $W_{g_{1}}W_{g_{2}}\cdots W_{g_{r}}$
does not vanish unless $r > r_{0}$ for some (large) fixed integer
$r_{0}$. We thus have \textit{\textit{crossed product grading}
$\Rightarrow$ \textit{strong grading} $\Rightarrow$ nondegenerate}
\textit{grading} $\Rightarrow$ \textit{bounded nondegenerate
grading} $\Rightarrow$ \textit{connected grading}.

 In section \ref{sec:examples} we show that if a PI algebra $W$ is
``bounded nondegeneratly'' $G$-graded than the main theorem is
false in general.

\begin{definition} (see \cite{CLMR, El-Pass, Pass})
Let $n>1$ be an integer.
\begin{enumerate}

\item

We say that a group $G$ is $n$-permutable (resp. $n$-rewritable),
denoted by $P_{n}$ (resp. $Q_{n})$, if for any $n$-tuple
$(g_{1},\ldots, g_{n}) \in G^{(n)}$ there exists a nontrivial
permutation $\sigma \in Sym(n)$ (resp. distinct
permutations $\sigma, \tau \in Sym(n)$) such that
$$
g_{1}g_{2}\cdots g_{n}= g_{\sigma(1)}g_{\sigma(2)}\cdots
g_{\sigma(n)} \in G
$$

(resp.
$$
g_{\sigma(1)}g_{\sigma(2)}\cdots
g_{\sigma(n)}= g_{\tau(1)}g_{\tau(2)}\cdots
g_{\tau(n)} \in G).
$$

\item
We say that a group $G$ satisfies $PI_{n}$ if the group algebra
$FG$ satisfies a (multilinear) identity of degree $n$ (it is well
known that since $F$ is a field of characteristic zero, the
$T$-ideal of identities is generated by multilinear polynomials).

\end{enumerate}

\end{definition}

Clearly, $P_{n} \Rightarrow P_{n+1}$, $Q_{n} \Rightarrow Q_{n+1}$
and $P_{n} \Rightarrow Q_{n}$. We say that group is permutable
(resp. rewritable, PI), if it is $n$-permutable (resp.
$n$-rewritable, $PI_{n}$) for some $n$. We denote
(with a slight abuse of notation) by $P$, $Q$, $PI$ the families
of all permutable, rewritable or PI groups. It was proved in
\cite{El-Pass} that if a group is $n$-rewritable then it is
$m$-permutable where $m$ is bounded by a function of $n$.

As for the condition $PI_{n}$, it is easy to show that if $FG$
satisfies a (multilinear) polynomial identity of degree $n$ then
the group $G$ is $n$-permutable and in particular $n$-rewritable
(indeed, if $f(x_1,\ldots,x_n)=x_1\cdots x_n + \sum_{e\neq\sigma
\in Sym(n)}\alpha_{\sigma}x_\sigma(1)\cdots x_\sigma(n)$,
$\alpha_{\sigma}\in F$, is a multilinear identity of $FG$ and
$(g_1,\ldots,g_{n}) \in G^{(n)}$ is any $n$th tuple, the
evaluation $x_{i}=g_{i}$, $i=1,\ldots,n$, yields $g_1\cdots g_n =
g_{\sigma(1)}\cdots g_{\sigma(n)}$ for some $e \neq \sigma \in
Sym(n)$). Thus we have that $PI_{n} \Rightarrow P_{n} \Rightarrow
Q_{n}$. As for the reverse direction of arrows the following is
known (see \cite{El-Pass}).

\begin{enumerate}
\item
$Q_{n}$ is strictly weaker than $P_{n}$ (although, as mentioned above, there exists a function $f$ such that $Q_{n}
\Rightarrow P_{f(n)}$).
\item
$P_{n} \nRightarrow PI_{m}$
for any $n$ and $m$. In particular it is known that if $G$
satisfies $PI_{m}$, then $G$ has a finite index abelian subgroup
whose index is bounded by a function of $m$ whereas for any $n > 2$ there exists an infinite family of finite groups $\{G_{i}\}_i$
which satisfy $P_{n}$, whose PI degree is $d_{i}$ and $\lim
 d_{i}=\infty$ (the PI degree of $G$ is the minimal degree of a
nontrivial polynomial identity of $FG$).

 \end{enumerate}

\begin{remark} \label{f.g. or non f.g. rewrite-permute}
As for the existence of a finite index abelian subgroup in $G$ and
the permutability or rewritability conditions there is an
interesting distinction between finitely/nonfinitely generated
groups. If $G$ is finitely generated and satisfies $P_{n}$ (or
$Q_{n}$) then it has an abelian subgroup of finite index (note
however, as mentioned above, the index is not bounded by a
function of $n$ ; see example in section \ref{sec:examples}). If
$G$ is not finitely generated, it may not have a finite index
abelian subgroup. However it does have a characteristic subgroup
$H$ whose index $[G:H]$ is bounded by a function of $n$ and whose
commutator subgroup $H'$ is finite, and its order is bounded by a
function of $n$.

\end{remark}

In view of the above considerations it is natural to introduce the
following condition on a group $G$.

\begin{definition}
Let $G$ be any group. We say that $G$ satisfies $T_{n}$ if there
exists a PI algebra $W$ of PI degree $n$ which admits a
nondegenerate $G$-grading. We say that $G$ has
$T$ if it has $T_{n}$ for some $n$.

\end{definition}

It is easy to see that the argument which shows $PI_{n}
\Rightarrow P_{n}$ shows also that $T_{n} \Rightarrow P_{n}$. This
simple fact will play an important role while extending the proof
of the main theorem from finitely generated residually finite
groups to arbitrary finitely generated groups.

Note that since the group algebra $FG$ is nondegenerately
$G$-graded we have $PI_{n} \Rightarrow T_{n}$. In the other direction it follows from our main theorem that if $G$ satisfies $T_{n}$,
then $G$ has $PI_{m}$ for some $m$ (indeed, $G$ is abelian by finite and hence, by \cite{Kap}, the group algebra $FG$ is PI).
As for the relation between $m$ and $n$ we have the following result.

\begin{theorem}
Let $G$ be any group and suppose it grades nondegenerately a PI
algebra $W$ of PI degree $n$. Then the group algebra $FG$ is PI
and its PI degree is bounded by $n^{2}$. Similarly, $\exp(FG) \leq
\exp(W)^{2}$.

\end{theorem}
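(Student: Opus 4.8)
The plan is to prove the single clean statement that $FG$ satisfies every polynomial identity of the matrix algebra $M_{\exp(W)}(F)$; both numerical claims are then immediate. Indeed, $\Id(M_{\exp(W)}(F))\subseteq\Id(FG)$ gives at once $\exp(FG)\le\exp(M_{\exp(W)}(F))=\exp(W)^2$, while the Amitsur--Levitzki standard identity $s_{2\exp(W)}\in\Id(M_{\exp(W)}(F))$ becomes an identity of $FG$; since the Giambruno--Zaicev description of the exponent yields the standard estimate $\exp(W)\le\tfrac14(\text{PI deg }W)^2=n^2/4$ (see \cite{GZ}), this identity has degree $2\exp(W)\le n^2/2\le n^2$. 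This is the sense in which the two bounds are ``similar'': both are squared shadows of one inequality.

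To prove $\Id(M_{\exp(W)}(F))\subseteq\Id(FG)$ I would reduce to finite groups. A multilinear polynomial is an identity of $FG$ once it is an identity of $FH$ for every finitely generated subgroup $H\le G$, and on such an $H$ the grading restricts to a nondegenerate grading of the subalgebra $\bigoplus_{h\in H}W_h$, whose exponent is at most $\exp(W)$; so I may assume $G$ is finitely generated. By Theorem \ref{thm:Main_Theorem}, $G$ is then abelian-by-finite, hence virtually a finitely generated abelian group and therefore residually finite, so $\Id(FG)=\bigcap_i\Id(F[G/N_i])$ over the finite quotients $G/N_i$. Each $G/N_i$ inherits, by coarsening, a nondegenerate grading of the same algebra $W$. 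Thus everything comes down to the finite case: if a finite group $G_0$ grades $W$ nondegenerately, then every irreducible $G_0$-representation has degree at most $\exp(W)$. Granting this, over $\overline{F}$ each $F[G/N_i]$ is a product of matrix blocks $M_{d}(F)$ with $d\le\exp(W)$, hence satisfies $\Id(M_{\exp(W)}(F))$, and intersecting over $i$ gives the desired inclusion.

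The finite-case inequality is the main obstacle, and for it I would pass to a finite-dimensional model. By representability for $G_0$-graded algebras (\cite{AB,Svi}) there is a finite-dimensional $G_0$-graded algebra $A$ that is graded-PI-equivalent to $W$, so $\exp(A)=\exp(W)$; as the vanishing of a graded monomial is a graded identity, nondegeneracy transfers to $A$. The radical $J$ of $A$ is graded, and the grading descends nondegenerately to $A_{ss}=A/J$: were some product $A_{h_1}\cdots A_{h_r}$ contained in $J$, repeating the tuple would push a power of it into $J^{k}=0$, contradicting nondegeneracy. By the Giambruno--Zaicev theorem (\cite{GZ}), $\exp(W)=\exp(A)\ge\dim_F B$ for every simple component $B$ of $A_{ss}$. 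It then remains to show, using the classification of finite-dimensional graded-simple algebras as matrix algebras over twisted group algebras of subgroups of $G_0$, that a nondegenerate grading by the \emph{whole} of $G_0$ forces a single simple component large enough to accommodate every irreducible $G_0$-representation; this yields the bound $\exp(W)\ge d(G_0)$ on the largest irreducible degree.

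I expect this last structural step to be the crux of the difficulty: one must convert the combinatorial strength of nondegeneracy into a genuine lower bound on the dimension of one simple block of $A_{ss}$ in terms of the largest irreducible degree of $G_0$. A naive codimension count is useless here, since the number of distinct homogeneous components a product can reach is bounded by $|G_0|$ and contributes nothing exponentially; the growth must instead be extracted from within a fixed component. It is precisely at this point that nondegeneracy---rather than the weaker connected or bounded-nondegenerate conditions, for which the statement is false (cf. Section \ref{sec:examples})---is indispensable, as it is what prevents the relevant products from collapsing into the radical or into smaller blocks.
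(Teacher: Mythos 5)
Your reduction scaffolding is sound and essentially coincides with the paper's: restrict to finitely generated subgroups, use abelian-by-finiteness (hence residual finiteness) to write $\Id(FG)$ as an intersection over finite quotients carrying induced nondegenerate gradings, then in the finite case pass via representability to a finite-dimensional model, kill the radical, and land on a nondegenerately graded $G_0$-simple component. But the step you yourself single out as ``the crux'' --- that for a finite group $G_0$ nondegenerately grading $W$ every irreducible degree is at most $\exp(W)$ --- is the entire content of the theorem, and you do not prove it; you only reformulate it as the existence of a single simple block of $A_{ss}$ ``large enough to accommodate every irreducible $G_0$-representation.'' That reformulation is moreover dangerous: Example \ref{binary thetraeder} ($W=\mathbb{C}^{\alpha}A_4\cong M_2(\mathbb{C})\oplus M_2(\mathbb{C})\oplus M_2(\mathbb{C})$, $\exp(W)=4$, largest irreducible degree $b(A_4)=3$) shows that no ungraded simple block need afford the largest irreducible representation of $G_0$, so the most natural reading of your claim is false, and in any case the bound $b(G_0)\le\exp(W)$ is not extracted in the paper from a dimension count on a block of $A$. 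The actual mechanism is different in kind: writing the nondegenerate $G_0$-simple component as $F^{\alpha}H\otimes M_k(F)$ with every right $H$-coset represented in the elementary tuple (Lemma \ref{lem:nondeg_simple}), one builds a $G_0$-graded embedding $FG_0\hookrightarrow FH\otimes M_k(F)$ and then an ``untwisting'' embedding $FH\hookrightarrow F^{\alpha}H\otimes M_d(F)$ via $U_h\mapsto V_h\otimes\rho(V_h^{-1})^{t}$, where $d^2=\exp(F^{\alpha}H)\le\exp(A)$; composing gives $FG_0\hookrightarrow A\otimes M_d(F)$ and hence $\exp(FG_0)\le\exp(A)\,d^2\le\exp(A)^2$ (Corollary \ref{reduction from G-simple to group algebra}). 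The square in the statement is produced precisely by this tensoring, and nothing in your outline supplies a substitute for it.

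Two smaller points. First, your bookkeeping for the PI-degree rests on ``$\exp(W)\le\tfrac14 n^2$,'' which is not the Giambruno--Zaicev estimate for a general PI-algebra; the general bound is $\exp(W)\le(n-1)^2$ (the equality $\exp=\tfrac14(\mathrm{PIdeg})^2$ holds for semisimple, e.g.\ $G$-simple, algebras only), and with it your own argument gives $d(FG)\le 2\exp(FG)^{1/2}\le 2\exp(W)\le 2(n-1)^2$, which is the bound the paper actually establishes in Section \ref{sec:infinite}. Second, for non-affine $W$ representability does not produce a finite-dimensional $G_0$-graded algebra graded-PI-equivalent to $W$, only the Grassmann envelope of a $\mathbb{Z}_2\times G_0$-graded one; Proposition \ref{reduction from arbitrary to finite dimensional algebras} is needed to extract a finite-dimensional nondegenerately $G_0$-graded $W_0$ with $\exp(W_0)\le\exp(W)$ (an inequality, not the equality you assert), and your outline should include that step.
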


It is somewhat surprising that $T_{n} \nRightarrow PI_{n}$
(intuitively, the group algebra $FG$ seems to be the ``smallest or
simplest'' $G$-graded algebra whose grading is nondegenerate). The
following example shows that twisted group algebras may have lower
PI degree.

\begin{example} \label{binary thetraeder}

Let $A_{4}$ be the alternating group of order $12$. It is known
that the largest irreducible complex representation is of degree
$3$ and hence by Amitsur-Levitzky theorem the PI degree is $6$. On
the other hand, the group $A_{4}$ admits a nontrivial cohomology
class $\alpha \in H^{2}(A_{4}, \mathbb{C}^{*})$ (corresponding to the
binary tetrahedral group of order $24$). Since twisted group
algebras with nontrivial cohomology class cannot admit the trivial
representation, we have $\mathbb{C}^{\alpha}A_{4}\cong M_{2}(\mathbb{C}) \oplus
M_{2}(\mathbb{C}) \oplus M_{2}(\mathbb{C})$ and hence the PI degree is $4$.

\end{example}

\begin{conjecture}

Let $W$ be a an algebra over an algebraically closed field $F$ of
characteristic zero satisfying a PI of degree $n$. Suppose $W$ is
nondegenerately graded by a group $G$. Then there exists a class
$\alpha \in H^{2}(G,F^{*})$ such that the twisted group algebra
$F^{\alpha}G$ has PI degree bounded by the same integer $n$.
\end{conjecture}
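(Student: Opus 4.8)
The plan is to recast the conjecture as a statement about polynomial identities rather than about subalgebras. Observe that if we can produce a class $\alpha\in H^{2}(G,F^{*})$ for which the twisted group algebra $F^{\alpha}G$ lies in the variety generated by $W$, i.e. $\operatorname{Id}(W)\subseteq\operatorname{Id}(F^{\alpha}G)$, then every identity of $W$ is an identity of $F^{\alpha}G$; since $W$ satisfies an identity of degree $n$, so does $F^{\alpha}G$, and therefore its PI degree is at most $n$, which is exactly the assertion. Thus the whole problem reduces to \emph{selecting the correct cocycle $\alpha$ and establishing the membership $F^{\alpha}G\in\operatorname{var}(W)$}. This is strictly sharper than the already-established bound $\operatorname{PIdeg}(FG)\le n^{2}$: that weaker estimate is naturally obtained by relating $W$ to a doubling (tensor-square type) construction, an operation which symmetrizes away the intrinsic twist of the grading and thereby squares the invariant. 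The conjecture asserts that the twist discarded in that doubling is precisely accounted for by a single cohomology class.

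First I would reduce to the case that $G$ is finite, exactly as in the passage from finite to infinite groups carried out for the Main Theorem. By that theorem $G$ is abelian-by-finite, say with an abelian subgroup $U$ of index $m\le\exp(W)^{K}$; for any $\alpha$ the algebra $F^{\alpha}G$ is then PI, its irreducible representations have dimension bounded by $m$, and its PI degree is governed by its finite-dimensional representations. In the finite semisimple regime one has $F^{\alpha}G\cong\bigoplus_{j}M_{d_{j}}(F)$, and by the Amitsur--Levitzki theorem $\operatorname{PIdeg}(F^{\alpha}G)=2\max_{j}d_{j}(\alpha)$. The target thus becomes concrete: find $\alpha$ with $\max_{j}d_{j}(\alpha)\le n/2$, where $n/2$ bounds the size of the largest matrix block occurring in a finite-dimensional model of $W$ (the bound $t\le n/2$ on a block of size $t$ being forced by $2t\le n$).

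Next I would manufacture the cocycle from the grading itself. Replacing $W$ by a finite-dimensional $G$-graded model $A$ that is graded PI-equivalent to $W$ via graded representability \cite{AB}, I would decompose the semisimple part of $A$ into $G$-graded-simple components and invoke the structure theory of group gradings on matrix algebras (in the spirit of Bahturin, Sehgal and Zaicev): each such component is isomorphic to $M_{\ell_{i}}(F^{\beta_{i}}H_{i})$ with $H_{i}\le G$ a subgroup, $\beta_{i}$ a nondegenerate (``fine'') class on $H_{i}$ so that $F^{\beta_{i}}H_{i}\cong M_{t_{i}}(F)$, and $t_{i}$ bounded by the block size, hence by $n/2$. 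These local classes $\beta_{i}$ on the subgroups $H_{i}$ are the raw material out of which the global class $\alpha$ must be assembled, and the nondegeneracy of the grading is what should guarantee that the $H_{i}$ collectively exhaust $G$.

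The hard part will be twofold. First, the local cohomological and symplectic data $(H_{i},\beta_{i})$ live on different, overlapping subgroups, and there is no functorial recipe gluing them into a single class on $G$ whose \emph{every} projective irreducible stays within the bound $n/2$; reconciling these constraints is, I expect, the central obstacle and the reason the statement remains only a conjecture. Second --- and this is what rules out a purely structural shortcut --- the membership $F^{\alpha}G\in\operatorname{var}(W)$ cannot in general be realized by exhibiting $F^{\alpha}G$ as a graded subalgebra or graded subquotient of $W$. The $\mathbb{Z}_{2}$-graded Grassmann algebra already illustrates this: its grading is nondegenerate, yet every odd element squares to zero, so no invertible homogeneous element of odd degree exists and no faithful copy of a twisted group algebra of the grading group survives inside it. Consequently the membership has to be proved \emph{asymptotically}, at the level of codimension sequences and relatively free algebras (or via ultraproducts), rather than by producing an explicit copy --- precisely the kind of quantitative PI-theoretic input developed for the Main Theorem, now to be pushed from the untwisted bound $n^{2}$ down to the sharp value $n$.
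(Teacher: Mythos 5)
The statement you are asked to prove is presented in the paper as a \emph{conjecture}: the authors prove it only for finite $G$ (and even that finite case is recorded as a separate theorem whose proof is not written out in the body), so the general statement is open and there is no proof in the paper to compare against. Your proposal does not close the gap either, and you say so yourself. After the correct reformulation (find $\alpha$ with $\Id(W)\subseteq\Id(F^{\alpha}G)$, which gives PI degree $\le n$) and the reduction to a finite-dimensional graded model whose $G$-simple components are of the form $F^{\beta_i}H_i\otimes M_{\ell_i}(F)$, the entire mathematical content of the conjecture is the construction of a single class $\alpha\in H^{2}(G,F^{*})$ out of the local data $(H_i,\beta_i)$ so that \emph{every} projective irreducible degree is controlled by $n/2$. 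You explicitly leave this step undone (``reconciling these constraints is, I expect, the central obstacle and the reason the statement remains only a conjecture''). What you have is a plausible strategy and a correct identification of where the difficulty lies, not a proof.

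Two further points in the sketch would need repair even as a reduction. First, the passage to finite $G$ is not ``exactly as in'' the finite-to-infinite argument for the Main Theorem: there the object being transported is an abelian subgroup of bounded index, which passes cleanly through finite quotients and through the direct-limit argument over finitely generated subgroups (Proposition \ref{pro:to_arbitrary_groups}); a family of cohomology classes defined on varying finite quotients or finitely generated subgroups does not automatically assemble into a single class on $G$, and no analogue of that limiting argument is supplied. Second, in the Bahturin--Sehgal--Zaicev decomposition the cocycle $\beta_i$ on $H_i$ need not be nondegenerate, so $F^{\beta_i}H_i$ is in general a direct sum of several matrix blocks rather than a single $M_{t_i}(F)$; the bound $t_i\le n/2$ must be phrased for the largest block, and the nondegeneracy of the grading gives you coset-covering of $G$ by the $H_i$'s (as in Lemma \ref{lem:nondeg_simple}), not nondegeneracy of the $\beta_i$. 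These are fixable; the central gluing problem is not, at least not by anything in your outline or in the paper.
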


\begin{theorem} Notation as above.
The conjecture holds whenever the group $G$ is finite.

\end{theorem}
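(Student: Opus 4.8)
The plan is to reduce the statement to the finite-dimensional structure theory of $G$-graded simple algebras and then to manufacture the cohomology class $\alpha$ out of the dominant graded-simple component. The first observation is that nondegeneracy is a \emph{graded}-identity property: the grading is nondegenerate precisely when no graded monomial $x_{g_1}\cdots x_{g_r}$ (with the indicated homogeneous degrees) is a graded identity of $W$. Hence it is preserved under graded PI-equivalence, and by the graded representability theorem cited in the introduction I may replace $W$ by a finite-dimensional $G$-graded model $A$ (more precisely, the Grassmann envelope of a finite-dimensional $G\times\Z_2$-graded algebra, a refinement I would carry silently in the dimension bookkeeping) having the same graded identities, hence the same nondegenerate grading, and the same ordinary PI degree $n$. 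Over the algebraically closed field $F$, each $G$-graded simple constituent of $A$ is, by the Bahturin--Sehgal--Zaicev description, graded isomorphic to $M_k(F^\beta H)$ with $H\le G$, $\beta\in H^2(H,F^*)$, and $F^\beta H$ a graded division algebra.

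Next I isolate a single nondegenerate block. Because the graded radical is nilpotent, repeating any tuple enough times shows that a product of homogeneous components which lands in the radical must vanish outright on a longer tuple; hence nondegeneracy descends to the graded semisimple quotient $\bar A=\bigoplus_i B_i$. Within $\bar A$ a concatenation argument isolates one block: if every $B_i$ admitted a tuple $T_i$ with vanishing product, the concatenation $T_1T_2\cdots$ would contain each $T_i$ as a consecutive segment and so vanish in every $B_i$, hence in $\bar A$, a contradiction. Thus some $B=M_k(F^\beta H)$ is itself nondegenerately graded; being a section of $A$ it has PI degree at most $n$, and writing $F^\beta H\cong\bigoplus_j M_{d_j}(F)$ gives PI degree of $B$ equal to $2k\max_j d_j$. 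I then read off the combinatorics of the elementary grading: a product $B_{\gamma_1}\cdots B_{\gamma_r}$ is nonzero iff there is a walk $i_0,\dots,i_r$ with $\gamma_s\in g_{i_{s-1}}Hg_{i_s}^{-1}$, and rewriting this as $\gamma_s\,g_{i_s}H=g_{i_{s-1}}H$ identifies it with the action of $G$ on the left cosets $G/H$. As $G$ is transitive there, nondegeneracy of $B$ holds iff the defining elements $\{g_iH\}$ exhaust $G/H$; in particular $k\ge[G:H]$, so $[G:H]\max_j d_j\le k\max_j d_j\le n/2$.

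Finally I would produce $\alpha$ by realizing a twisted group algebra $F^\alpha G$ as a section of $B$. Since the cosets are exhausted, each component $B_g$ contains a monomial (generalized permutation) element $v_g=\sum_i e_{i,\sigma_g(i)}u_{h_g(i)}$, where $\sigma_g$ is the permutation of $G/H$ induced by $g$; each $v_g$ is invertible, and the subalgebra they generate is $G$-graded with full support. If the $v_g$ can be rescaled so as to multiply up to scalars, this subalgebra is exactly $F^\alpha G$ for the resulting class $\alpha\in H^2(G,F^*)$, and then the PI degree of $F^\alpha G$ is at most that of $B$, hence at most $n$; equivalently every irreducible $\alpha$-representation is a constituent of an induced object of dimension $[G:H]\max_j d_j\le n/2$.

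The main obstacle is precisely this last rescaling. Computing $v_{g_1}v_{g_2}$ one finds it equals $v_{g_1g_2}$ only up to a scalar of the form $\beta(\cdot,\cdot)$ that \emph{depends on the row} $i$, i.e. up to a diagonal rather than a scalar matrix. By Shapiro's lemma this row-dependent discrepancy is the class $[\beta]\in H^2(H,F^*)\cong H^2(G,\mathrm{Coind}_H^G F^*)$, and the $v_g$ close up to genuine scalars exactly when $[\beta]$ lies in the image of the restriction $H^2(G,F^*)\to H^2(H,F^*)$, i.e. when $\beta$ extends to $G$. This is where I expect the real work to lie: either to show that among the (possibly several) nondegenerate graded-simple components one may always select one whose $\beta$ extends, or to bypass the extension entirely and bound the least possible maximal degree of an irreducible projective representation $\mathrm{mind}(G)$ directly from the inequality $[G:H]\max_j d_j\le n/2$. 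The finiteness of $G$ — so that $H^2(G,F^*)$ is the finite Schur multiplier and restriction--corestriction arguments are available — is exactly the feature I would exploit to close this gap, which is why the theorem is stated only in the finite case.
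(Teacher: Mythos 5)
Your reductions faithfully reproduce the paper's machinery: passing to a finite-dimensional model, descending modulo the graded radical, isolating a single nondegenerately graded $G$-simple block $B=F^{\beta}H\otimes M_k(F)$, and translating nondegeneracy into the condition that the elementary tuple meets every coset of $H$ are exactly Propositions \ref{reduction from arbitrary to finite dimensional algebras}, \ref{reduction from finite dimensional to G-simple} and Lemma \ref{lem:nondeg_simple}, and your inequality $[G:H]\max_j d_j\le n/2$ is the correct numerical output of that pipeline. (For the record, the paper states this theorem without supplying a proof, so there is no argument of record to compare your last step against; the machinery it does develop --- the untwisting embedding $FG\hookrightarrow A\otimes M_d(F)$ --- only yields the weaker bound $d(FG)\le n^{2}$ of the theorem preceding the conjecture.) The difficulty is that the step carrying the entire content of the statement, namely producing $\alpha\in H^{2}(G,F^{*})$ with $d(F^{\alpha}G)\le n$, is precisely the step you leave open, and the one concrete mechanism you propose provably cannot work in general. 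Your diagnosis is correct that the generalized permutation elements $v_g$ close up to a twisted group algebra $F^{\alpha}G$ exactly when $\beta$ lies in the image of the restriction $H^{2}(G,F^{*})\to H^{2}(H,F^{*})$; but this map need not be surjective. Concretely, let $G=C_{p^{2}}\times C_{p^{2}}$ and let $H\cong C_p\times C_p$ be its $p$-torsion subgroup: every alternating bicharacter of $G$ restricts trivially to $H$, so the nondegenerate class $\beta$ on $H$ extends to no class of $G$, while $F^{\beta}H\otimes M_{p^{2}}(F)$, elementarily graded by a transversal of $H$ in $G$, is a legitimate nondegenerately $G$-graded $G$-simple algebra by Lemma \ref{lem:nondeg_simple}. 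On that input your argument terminates with no candidate $\alpha$ (the conclusion is still true there for trivial reasons, since $G$ is abelian, which shows the required $\alpha$ must sometimes come from somewhere other than the block you isolated). Restriction--corestriction only gives $\mathrm{cor}\circ\mathrm{res}=[G:H]$ and says nothing about surjectivity of restriction, so the tool you hope to exploit does not close the gap either. What is genuinely needed, and missing, is a proof that for every finite $G$, every $H\le G$ and every $\beta\in H^{2}(H,F^{*})$ there exists $\alpha\in H^{2}(G,F^{*})$ whose largest irreducible projective degree satisfies $b_{\alpha}(G)\le [G:H]\,b_{\beta}(H)$, or else a construction of $\alpha$ that does not pass through the subgroup $H$ at all.

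A secondary point: you propose to carry the Grassmann envelope ``silently in the dimension bookkeeping,'' but the non-affine representability theorem identifies $\Id_G(W_L)$ with $\Id_G(E(C))$, not with $\Id_G(C)$, and Proposition \ref{reduction from arbitrary to finite dimensional algebras} only controls $\exp(C)\le\exp(W)$; there is no general containment $\Id(W)\subseteq\Id(C)$, so the assertion that the finite-dimensional model (and hence the extracted $G$-simple block) has PI degree at most $n$ is not automatic. Since the conjecture is a statement about PI degree rather than exponent, this transfer also requires an argument in the non-affine case.
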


The main tools used in the proof of the main theorem are the
representability theorem for $G$-graded algebras where $G$ is a
\textit{finite group} \cite{AB} and Giambruno and Zaicev's result
on the exponent of $W$ \cite{GZ}. The representability theorem
allows us to replace the $G$-graded algebra $W$ by a finite
dimensional $G$-graded algebra $A$ (or the Grassmann envelope of a
finite dimensional $\mathbb{Z}_{2}\times G$-graded algebra $A$)
whereas Giambruno and Zaicev's result provides an interpretation
of $\exp(W)$ in terms of the dimension of a certain subalgebra of
$A$. The proof of Theorem \ref{thm:Main_Theorem} in case the group
$G$ is finite is presented in section $3$. In section $4$ we show
how to pass from finite groups to arbitrary groups and by this we
complete the proof of Theorem \ref{thm:Main_Theorem}.

In section $2$ we recall some background on group gradings and PI
theory needed for the proofs of the main results of the paper. In the last section of the paper, section $\ref{sec:examples}$, we present
(1) a family of $n$-permutable with no uniform bound on the index
of abelian subgroups and (2) an example which shows that we cannot
replace in the main theorem nondegenerate $G$-gradings with
bounded nondegenerate $G$-grading.

We close the introduction by explaining why one would prefer
bounding the index of an abelian subgroup by a function of the
$\exp(W)$ (as in the main theorem) rather than by the PI degree of
$W$. It is known that $\exp(W)$ is bounded by a function of the PI
degree (e.g. $\exp(W) \leq (d(W)-1)^{2}$, see Theorem 4.2.4
\cite{GiamZai}) but such function does not exist in the reverse
direction. Indeed, since $\exp(W)$ is an \textit{asymptotic} invariant it
remains invariant if we consider the $G$-graded $T$-ideal
generated by all polynomials in $\Id_{G}(W)$ of degree at least
$m$ (any $m$) whereas the $G$-graded PI degree and hence the
ordinary PI degree is at least $m$.

\end{section}

\begin{section}{Background and some preliminary reductions} \label{Background and some preliminary reductions}

We start by recalling some facts on $G$-graded algebras $W$ over a
field $F$ of characteristic zero and their corresponding
$G$-graded identities. We refer the reader to \cite{AB} for a
detailed account on this topic.

\begin{remark}

In this section we consider only finite groups. Although some of
the basic results in $G$-graded PI theory hold for arbitrary
groups, one of our main tools, namely the ``representability
theorem'' for $G$-graded PI algebras, is false for infinite
groups.

\end{remark}

\begin{subsection}{$G$-graded identities}

Let $W$ be a PI-algebra over $F$. Suppose $W$ is $G$-graded where
$G$ is a finite group. Denote by $I=\Id_{G}(W)$ the ideal of
$G$-graded polynomial identities of $W$. It consists of all elements in the free
$G$-graded algebra $F\langle X_{G} \rangle$ over $F$, that vanish
upon any admissible evaluation on $W$. Here, $X_{G}=\bigcup_{g \in
G} X_{g}$ and $X_{g}$ is a set of countably many variables of
degree $g$. An evaluation on $W$ is admissible if the variables
from $X_{g}$ are replaced only by elements of $W_{g}$. The ideal
$I$ is a $G$-graded $T$-ideal, i.e. it is invariant under all $G$-graded
endomorphisms of $F \langle X_{G} \rangle$.

We recall from \cite{AB} that the $G$-graded $T$-ideal $I$ is
generated by multilinear polynomials. Consequently, it remains
invariant when passing to any field extension $L$ of $F$, that is $\Id_{G}(W\otimes_{F}L)=\Id_{G}(W)\otimes_{F}L$.

The following observations play an important role in the proofs.

\begin{observation}\label{obs:equivalent_nondeg}
The condition \textit{nondegenerate} $G$-\textit{grading} on $W$ can be easily
translated into the language of $G$-graded polynomial identities.
Indeed a $G$-grading on $W$ is nondegenerate if and only if for
any integer $r$ and any tuple $(g_{1},\ldots, g_{r}) \in G^{(r)}$,
the $G$-graded multilinear monomial $x_{g_{1},1}\cdots x_{g_{r},r}$ is
a $G$-graded \textit{nonidentity} of $W$ (in short we say that $\Id_{G}(W)$
contains no multilinear $G$-graded monomials). Consequently, if
$G$-graded algebras $W_{1}$ and $W_{2}$ are $G$-graded
PI-equivalent (i.e. have the same $T$-ideal of $G$-graded
identities), then the grading on $W_{1}$ is nondegenerate if and
only if the grading on $W_{2}$ is nondegenerate.

\end{observation}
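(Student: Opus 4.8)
The plan is to unwind the definitions directly; the statement is essentially a dictionary between the product of homogeneous components and admissible evaluations of a multilinear monomial, so the whole argument is a definitional translation rather than a computation.

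First I would recall the two relevant notions. By definition a $G$-graded polynomial is a $G$-graded identity of $W$ precisely when it vanishes under every admissible evaluation, i.e. every substitution sending each variable of degree $g$ to an element of $W_{g}$; and the symbol $W_{g_{1}}\cdots W_{g_{r}}$ denotes the additive span of all products $w_{1}\cdots w_{r}$ with $w_{i}\in W_{g_{i}}$, so that $W_{g_{1}}\cdots W_{g_{r}}=0$ means exactly that every such product vanishes.

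Next I would fix $r$ and a tuple $(g_{1},\ldots,g_{r})$ and examine the monomial $m=x_{g_{1},1}\cdots x_{g_{r},r}$. The one point that needs care is that the variables $x_{g_{1},1},\ldots,x_{g_{r},r}$ are pairwise distinct: their position indices $1,\ldots,r$ differ, so no two of them coincide even when several of the $g_{i}$ agree. Hence an admissible evaluation of $m$ amounts to an \emph{independent} choice of $w_{i}\in W_{g_{i}}$ for each $i$, and $m$ takes the value $w_{1}\cdots w_{r}$. Therefore $m\in\Id_{G}(W)$ if and only if $w_{1}\cdots w_{r}=0$ for all such choices, which is exactly the condition $W_{g_{1}}\cdots W_{g_{r}}=0$; equivalently, $m$ is a nonidentity of $W$ if and only if $W_{g_{1}}\cdots W_{g_{r}}\neq 0$.

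Combining this over all $r$ and all tuples gives the first assertion: the grading is nondegenerate (every $W_{g_{1}}\cdots W_{g_{r}}\neq 0$) if and only if no multilinear monomial $x_{g_{1},1}\cdots x_{g_{r},r}$ lies in $\Id_{G}(W)$. For the final ``consequently'' clause I would argue purely formally: if $\Id_{G}(W_{1})=\Id_{G}(W_{2})$, then a fixed multilinear monomial lies in one $T$-ideal if and only if it lies in the other, so the property ``$\Id_{G}$ contains no multilinear monomial'' holds for $W_{1}$ if and only if it holds for $W_{2}$, and by the equivalence just established the grading on $W_{1}$ is nondegenerate if and only if that on $W_{2}$ is. I expect no substantive obstacle: the entire content lies in matching the subspace $W_{g_{1}}\cdots W_{g_{r}}$ with the set of values of a monomial in \emph{distinct} variables, which is precisely what multilinearity guarantees.
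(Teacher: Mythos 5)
Your proof is correct and is exactly the definitional translation the paper has in mind (the paper states this Observation without further proof, treating it as immediate). Your explicit remark that the variables are pairwise distinct because of the position indices, so that an admissible evaluation is an independent choice of $w_{i}\in W_{g_{i}}$, is precisely the point that makes the dictionary work.
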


\begin{observation}\label{obs:equivalent_exp}
If $W_1, W_2$ are two $G$-graded algebras with
$\Id_G(W_1)=\Id_G(W_2)$, then $\Id(W_1)=\Id(W_2)$ (the ungraded
identities). In particular we have $\exp(W_1)=\exp(W_2)$. Indeed,
this follows easily from the fact that a polynomial
$p(x_{1},\ldots,x_{n})$ is an ungraded identity of an algebra $W$
with a $G$-grading if and only if the polynomial $p(\sum_{g\in
G}x_{g,1}\ldots,\sum_{g\in G}x_{g,n})$ is a graded identity of $W$
as a $G$-graded algebra.

\end{observation}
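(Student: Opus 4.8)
The plan is to make precise the linearization correspondence already asserted in the statement and then read off both conclusions from it. First I would fix notation: write $F\langle x_1, x_2, \ldots\rangle$ for the free (ungraded) associative algebra and $F\langle X_G\rangle$, with variables $x_{g,i}$ ($g \in G$), for the free $G$-graded algebra as in the previous subsection. To each ungraded polynomial $p(x_1,\ldots,x_n)$ I attach the $G$-graded polynomial $\hat p := p\big(\sum_{g\in G} x_{g,1},\ldots,\sum_{g\in G} x_{g,n}\big)$ obtained by the substitution $x_i \mapsto \sum_{g\in G} x_{g,i}$. Since substitution into a fixed polynomial is an algebra homomorphism out of the free algebra, $\hat p$ is a well-defined element of $F\langle X_G\rangle$ and composition of substitutions behaves as expected; this is the only computation involved and it is entirely routine.

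The key lemma I would prove is that, for any $G$-graded algebra $W$, one has $p \in \Id(W)$ if and only if $\hat p \in \Id_G(W)$. For this, recall that each $w \in W$ has a unique decomposition $w = \sum_{g\in G} w_g$ with $w_g \in W_g$. Hence an arbitrary ungraded evaluation $x_i \mapsto w^{(i)}$ of $p$ coincides with the admissible graded evaluation $x_{g,i} \mapsto w^{(i)}_g$ of $\hat p$, because substituting $x_i \mapsto \sum_g x_{g,i}$ and then $x_{g,i} \mapsto w^{(i)}_g$ sends $x_i \mapsto \sum_g w^{(i)}_g = w^{(i)}$. Conversely, every admissible graded evaluation $x_{g,i} \mapsto v^{(i)}_g \in W_g$ of $\hat p$ arises in exactly this way from the ungraded evaluation $x_i \mapsto \sum_g v^{(i)}_g$ of $p$. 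Thus the two evaluations always produce the same element of $W$, and $p$ vanishes under all ungraded evaluations precisely when $\hat p$ vanishes under all admissible graded evaluations. I would stress that this argument uses no multilinearity of $p$, so it applies to an arbitrary ungraded polynomial.

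With the lemma in hand the first assertion is immediate: if $p \in \Id(W_1)$ then $\hat p \in \Id_G(W_1) = \Id_G(W_2)$, and applying the lemma to $W_2$ gives $p \in \Id(W_2)$; by symmetry $\Id(W_1) = \Id(W_2)$. For the equality of exponents I would recall that the ordinary codimension sequence is computed purely from the ordinary $T$-ideal, namely $c_n(W) = \dim_F\big(P_n /(P_n \cap \Id(W))\big)$, where $P_n$ is the space of multilinear ungraded polynomials of degree $n$; hence $\Id(W_1) = \Id(W_2)$ forces $c_n(W_1) = c_n(W_2)$ for all $n$, and therefore $\exp(W_1) = \exp(W_2)$ by the definition of the exponent as the limit of $c_n^{1/n}$. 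I do not expect a genuine obstacle here; the only point requiring care is the bookkeeping that admissible graded substitutions correspond, via the homogeneous decomposition, to \emph{all} ungraded substitutions, so that neither direction of the equivalence misses an evaluation.
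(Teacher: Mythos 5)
Your proposal is correct and follows exactly the route the paper takes: the paper's entire justification is the asserted equivalence $p\in\Id(W)\Leftrightarrow \hat p\in\Id_G(W)$, which you simply prove in detail via the homogeneous decomposition $w=\sum_g w_g$, and the exponent claim then follows since $c_n(W)$ depends only on $\Id(W)$. No gaps; note only that the substitution $x_i\mapsto\sum_{g\in G}x_{g,i}$ tacitly uses that $G$ is finite (as assumed in that section of the paper).
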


As noted above, the nondegeneracy condition satisfied by a
$G$-grading on $W$ depends only on the $T$-ideal of $G$-graded
identities, hence if the grading on a $G$-graded algebra $W$ over
a field $F$ is nondegenerate, the same holds for the $G$-graded
algebra $W_{L}=W \otimes_{F}L$. Similarly, the numerical invariant
$\exp(W)$ of the algebra $W$ remains unchanged if we extend
scalars.

\begin{remark}
In the main steps of the proof (in case $G$ is a finite group),
roughly speaking, we ''pass'' to simpler algebras
without increasing too much the exponent the PI
degree. More precisely, given an arbitrary $G$-grading on a PI
algebra $W$ we first pass to a finite dimensional $G$-graded
algebra $A$, then to a $G$-simple algebra and finally
to a group algebra, a case which was solved by Gluck using the classification of finite simple groups (see
\cite{Glu}).

\end{remark}

Let us recall some terminology and some facts from Kemer's
theory extended to the context of $G$-graded algebras as they
appear in \cite{AB}.

Let $W$ be a $G$-graded algebra over $F$. Suppose that $W$ is PI
(as an ungraded algebra). Kemer's representability theorem for
$G$-graded algebras assures that there exists a field extension
$L/F$ and a finite dimensional $\mathbb{Z}_{2} \times G$-graded
algebra $A$ over $L$ such that the Grassmann envelope $E(A)$ (with
respect to the $\mathbb{Z}_{2}$-grading) yields a $G$-graded
algebra which is $G$-graded PI-equivalent to $W_L$ (see Proposition \ref{reduction from arbitrary to finite dimensional algebras}). In case the
algebra $W$ is affine, or more generally in case it satisfies a
Capelli identity (it is known that any affine PI algebra satisfies a Capelli identity), there exists a field extension $L/F$ such that
the algebra $W_L$ is $G$-graded PI-equivalent to a finite
dimensional $G$-graded algebra $A$ over $L$. This result will be
used to reduce our discussion from infinite dimensional algebras
to finite dimensional ones in case the group $G$ is finite. As
extensions of scalars do not change the exponent (nor the
PI-degree) we assume that the field $L$ is algebraically closed.
\end{subsection}

\begin{subsection} {$G$-simple algebras}
The next ingredient we need is a result of Bahturin, Sehgal and
Zaicev, which determines the $G$-graded structure of finite
dimensional $G$-simple algebra over an algebraically closed field
of characteristic zero.

Let $A$ be the algebra of $r \times r$-matrices over $F$
and let $G$ be any group (here, $G$ may be infinite). Fix an $r$-tuple $\alpha=(g_{1},\ldots,g_{r}) \in G^{(r)}$. Consider the
$G$-grading on $A$ given by

$$
A_{g}=\span_{F}\{e_{i,j}: g=g_{i}^{-1}g_{j}\}.
$$
One checks easily that this indeed determines a $G$-grading on
$A$. Clearly, since the algebra $A$ is simple, it is $G$-simple as a $G$-graded algebra.

Next we present a different type of $G$-gradings on semisimple
algebras which turn out to be $G$-simple. Let $H$ be any finite
subgroup of $G$ and consider the group algebra $FH$. By Maschke's
theorem $FH$ is semisimple and of course $H$-simple (any nonzero
homogeneous element is invertible). More generally we consider
twisted group algebras
$F^{\alpha}H$ as $H$-graded algebras, where $\alpha$ is a
$2$-cocycle in $Z^{2}(H, F^{*})$ ($H$ acts trivially on $F$).
Recall that $F^{\alpha}H = \span_{F} \{U_{h}: h \in H\}$,
$U_{h_{1}}U_{h_{2}}=\alpha(h_{1},h_{2})U_{h_{1}h_{2}}$, for all
$h_{1},h_{2} \in H$. We say that the basis $\{U_{h}:h \in H\}$
\textit{corresponds to the $2$-cocycle} $\alpha$. Finally, we may view the twisted group algebra $F^{\alpha}H$ as a $G$-graded algebra by setting $A_{g} = 0$ for $g
\in G \setminus H$ and as such it is $G$-simple. We
refer to the $G$-grading on $F^{\alpha}H$ as a \textit{fine grading} (i.e. every homogeneous component is of dimension $\leq 1$).

\begin{remark}
In the sequel, whenever we say that $\{U_{h}: h\in H\}$ is a basis
of $F^{\alpha}H$, we mean that the basis corresponds to the
cocycle $\alpha$. One knows that in general an homogeneous basis
of that kind corresponds to a cocycle $\alpha{'}$ cohomologous to
$\alpha$.

\end{remark}

In case the field $F$ is algebraically closed
of characteristic zero, we have that these two gradings
(\textit{elementary} and \textit{fine}) are the building blocks of
any $G$-grading on a finite dimensional algebra so that it is
$G$-simple. This is a theorem of Bahturin, Sehgal and Zaicev.

\begin{theorem} \cite{BSZ} \label{thm:simple_graded}
Let $A$ be a finite dimensional $G$-graded simple algebra. Then
there exists a finite subgroup $H$ of $G$, a $2$-cocycle $\alpha
:H\times H\rightarrow F^{*}$ where the action of $H$ on $F$ is
trivial, an integer $r$ and an $r$-tuple
$(g_{1},g_{2},\ldots,g_{r})\in G^{(r)}$ such that $A$ is
$G$-graded isomorphic to $\Lambda=F^{\alpha}H\otimes M_{r}(F)$
where $\Lambda_{g}=span_{F}\{U_{h}\otimes e_{i,j} \mid
g=g_{i}^{-1}hg_{j}\}$. Here $U_{h}\in F^{\alpha}H$ is a
representative of $h\in H$ and $e_{i,j}\in M_{r}(F)$ is the
$(i,j)$ elementary matrix.

In particular the idempotents $1\otimes e_{i,i}$ as well as the
identity element of $A$ are homogeneous of degree $e\in G$.
\end{theorem}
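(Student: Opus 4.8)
The plan is to prove this by first establishing a graded Wedderburn--Artin structure theorem and then classifying finite dimensional graded division algebras over $F$. I would begin by recording that the identity is homogeneous of degree $e$: writing $1=\sum_{g}u_{g}$ with $u_{g}\in A_{g}$ and comparing homogeneous components in $a=1\cdot a=a\cdot 1$ for homogeneous $a\in A_{h}$ forces $u_{e}a=au_{e}=a$ and $u_{g}a=au_{g}=0$ for $g\neq e$; as this holds for all homogeneous $a$ we get $1=u_{e}\in A_{e}$, which already yields the ``in particular'' assertion. Next comes the main reduction. Because $A$ is finite dimensional it has minimal graded left ideals, so its graded socle (the sum of all minimal graded left ideals) is nonzero; right multiplication by homogeneous elements sends minimal graded left ideals to $0$ or to minimal graded left ideals, so the graded socle is a nonzero graded two-sided ideal and graded simplicity forces it to equal $A$. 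Thus $A$ is a sum of minimal graded left ideals. Fixing one such ideal $M$ and setting $D=\End_{A}(M)^{\op}$, a graded Schur argument shows $D$ is a \emph{graded division algebra}: for a nonzero homogeneous $f\in D$ of degree $g$, both $\Ker f$ and $\Image f$ are graded submodules of the graded-simple $M$, so $f$ is bijective with $f^{-1}$ homogeneous of degree $g^{-1}$. Graded simplicity also makes every minimal graded left ideal isomorphic to a degree shift of $M$, and a graded density / double-centralizer argument then gives a graded isomorphism $A\cong\End_{D}(M)$, with $M$ a finite dimensional graded right $D$-module.

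I would then classify $D$. Let $H=\supp(D)$. For nonzero homogeneous $a\in D_{s}$ and $b\in D_{t}$, invertibility gives $ab\neq 0$ (so $st\in H$) and $a^{-1}\in D_{s^{-1}}$ (so $s^{-1}\in H$), whence $H$ is a subgroup of $G$, finite since $\dim_{F}D<\infty$. The component $D_{e}$ is an ordinary finite dimensional division algebra over the algebraically closed field $F$, so $D_{e}=F$. For each $h\in H$ and nonzero homogeneous $a,b\in D_{h}$ we have $ab^{-1}\in D_{e}=F$, so $\dim_{F}D_{h}=1$; choosing a homogeneous basis $\{U_{h}\}_{h\in H}$ and writing $U_{h_{1}}U_{h_{2}}=\alpha(h_{1},h_{2})U_{h_{1}h_{2}}$ produces scalars $\alpha(h_{1},h_{2})\in F^{*}$, and associativity of $D$ is precisely the $2$-cocycle identity for $\alpha$. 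Hence $D=F^{\alpha}H$ with its fine grading.

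It remains to recover the explicit form. As a graded right $D$-module, $M\cong\bigoplus_{i=1}^{r}D[g_{i}]$ for suitable shifts $g_{1},\dots,g_{r}\in G$ (the degrees of a homogeneous $D$-basis of $M$), so $A\cong\End_{D}(M)\cong M_{r}(D)=F^{\alpha}H\otimes M_{r}(F)$. A homogeneous $D$-endomorphism taking the $j$th basis vector (of degree $g_{j}$) into the copy of $D_{h}$ inside the $i$th summand corresponds to $U_{h}\otimes e_{i,j}$, and unwinding the degree shifts one checks its $G$-degree is $g_{i}^{-1}hg_{j}$. This gives exactly $\Lambda_{g}=\span_{F}\{U_{h}\otimes e_{i,j}\mid g=g_{i}^{-1}hg_{j}\}$, and transporting the isomorphism back identifies $A$ with $\Lambda$ as a $G$-graded algebra.

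I expect the main obstacle to be the graded Wedderburn step: since graded simplicity is strictly weaker than simplicity, one must set up graded module theory carefully (graded Schur, the graded socle argument, and graded density) to force $A\cong\End_{D}(M)$, and then keep honest track of the degree shifts $g_{i}$ so that the final grading comes out with $g_{i}^{-1}hg_{j}$ rather than some other convention. The hypothesis that $F$ is algebraically closed of characteristic zero enters only to force $D_{e}=F$ in the classification of $D$.
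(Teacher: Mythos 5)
The paper does not prove this statement at all: it is quoted from \cite{BSZ} as an imported structure theorem, so there is no internal argument to compare yours against. What you have written is the standard proof of the graded Wedderburn theorem (graded socle, graded Schur's lemma, classification of finite dimensional graded division algebras over an algebraically closed field, then bookkeeping of degree shifts), which is in substance the route taken in the literature the paper cites, and as an outline it is correct. Three points deserve more care. First, you use the existence of $1\in A$ before establishing it; for a finite dimensional graded-simple algebra unitality does follow (for instance once $A\cong\End_{D}(M)$ is in hand, or from the fact that $J(A)$ is a graded ideal in characteristic zero, so $A$ is semisimple), but as written your ``in particular'' clause rests on an unstated hypothesis. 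Second, the step $A\cong\End_{D}(M)$ --- faithfulness of $M$ plus surjectivity of the natural map --- is asserted rather than argued; faithfulness is immediate because the annihilator is a proper graded two-sided ideal, but surjectivity requires the graded density or double-centralizer theorem for finite dimensional graded-simple algebras, and this is precisely the technical heart of the proof, so it should not be waved at. Third, your closing remark that algebraic closure and characteristic zero enter ``only to force $D_{e}=F$'' is slightly off: algebraic closure alone gives $D_{e}=F$, while characteristic zero is the hypothesis under which \cite{BSZ} is actually stated and is what the surrounding paper needs elsewhere (gradedness of the Jacobson radical). None of these is a fatal gap, but the density step in particular should be either proved or explicitly cited.
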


\end{subsection}

\begin{subsection} {Asymptotic PI-theory}
The last ingredient we need is Regev, Giambruno and Zaicev's
PI-asymptotic theory. Let $W$ be an ordinary PI-algebra over an
algebraically closed field $F$ of characteristic zero and let
$\Id(W)$ be its $T$-ideal of identities. Consider the
$n!$-dimensional vector space

$$
P_{n}= span_{F}\{x_{\sigma(1)}\cdots x_{\sigma(n)}: \sigma \in
Sym(n) \}$$ and let $c_{n}(W) = \dim_{F}(P_{n}/P_{n} \cap \Id(W))$
be the $n$-th term of the codimension sequence of the algebra $W$.
It was proved by Regev in 72 \cite{Reg} that the sequence
$\{c_{n}(W)\}$ is exponentially bounded and conjectured by Amitsur
that the limit $lim_{n\rightarrow \infty} c_{n}^{1/n}$ exists (the
exponent of $W$) and is a nonnegative integer. The conjecture was
established by Giambruno and Zaicev in the late 90's by showing
that the limit coincides, roughly speaking, with the dimension of
a certain subspace ``attached'' to $W$.  In particular, for a matrix
algebra $M_d(F)$ we have $exp(M_d(F))=d^2$ and by the Amitsur
Levitzky theorem it has PI-degree $2d$. Any finite dimensional
$G$-simple algebra is a direct product of matrix algebra (as an
ungraded algebra), hence its $T$-ideal of identities coincides with the ideal of identities (and therefore
the exponent and PI-degree) of the
largest matrix algebra appearing in its decomposition.

\begin{remark}
It follows from the Amitsur-Levitzki theorem that if $A$ is a finite dimensional $G$-simple algebra $A$
we have $\exp(A)=\frac{1}{4}(PIdeg(A))^2$. For an arbitrary
PI-algebra we only have the bound $\exp(A)\leq (PIdeg(A)-1)^2$. Recall (from the last paragraph of the introduction) that
the PI-degree cannot be bounded from above by any function
of $\exp(A)$.
\end{remark}

\end{subsection}
\end{section}

\begin{section}{Proof of main theorem-Finite groups} \label{Proofs}

All groups considered in this section are finite.

For a PI-algebra $W$ over a field $F$ of characteristic zero
we denote by $\exp(W)$ its exponent.

\begin{proposition} \label{reduction from arbitrary to finite dimensional algebras}

Let $W$ be a PI-algebra over a field $F$. Suppose $W$ is graded
nondegenerately by a group $G$. Then there exists a field
extension $L$ of $F$ and a finite dimensional $L$-algebra $W_0$ which
is nondegenerately $G$-graded, such that $exp(W_0)\leq exp(W)$.
\end{proposition}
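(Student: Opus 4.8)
The plan is to extend scalars, use the $G$-graded representability theorem to replace $W$ by the Grassmann envelope of a finite dimensional algebra, and then take $W_0$ to be that finite dimensional algebra with the auxiliary $\mathbb{Z}_2$-grading forgotten. Since both the nondegeneracy condition and the exponent are unaffected by extension of scalars, it suffices to build $W_0$ over a suitable algebraically closed field $L$, starting from $W_L = W\otimes_F L$. By Kemer's representability theorem for $G$-graded algebras ($G$ finite), there is a finite dimensional $\mathbb{Z}_2\times G$-graded algebra $A$ over $L$ whose Grassmann envelope $E(A)$ is $G$-graded PI-equivalent to $W_L$. I propose $W_0=A$, regarded as a $G$-graded algebra via $A_g = A_{(0,g)}\oplus A_{(1,g)}$. (If $W$ satisfies a Capelli identity, in particular if $W$ is affine, the Grassmann envelope is unnecessary: then $W_L$ is already $G$-graded PI-equivalent to a finite dimensional $A$, and $W_0=A$ has $\exp(W_0)=\exp(W)$ outright.)

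First I would transfer nondegeneracy from $E(A)$ to $A$. Since $E(A)$ and $W_L$ share their $G$-graded $T$-ideal, Observation~\ref{obs:equivalent_nondeg} shows $E(A)$ is nondegenerately $G$-graded. Writing $E=E_0\oplus E_1$, one has $E(A)_g=(A_{(0,g)}\otimes E_0)\oplus (A_{(1,g)}\otimes E_1)$, so for every tuple $(g_1,\dots,g_r)$ the two products expand as sums of subspaces over parity vectors $\epsilon\in\{0,1\}^r$: namely $A_{g_1}\cdots A_{g_r}=\sum_\epsilon A_{(\epsilon_1,g_1)}\cdots A_{(\epsilon_r,g_r)}$ and $E(A)_{g_1}\cdots E(A)_{g_r}=\sum_\epsilon (A_{(\epsilon_1,g_1)}\cdots A_{(\epsilon_r,g_r)})\otimes(E_{\epsilon_1}\cdots E_{\epsilon_r})$. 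Each Grassmann factor $E_{\epsilon_1}\cdots E_{\epsilon_r}$ is a nonzero subspace of $E$, and a sum of subspaces is zero iff every summand is zero; hence the two products vanish under identical conditions on the $A_{(\epsilon_i,g_i)}$-products. Therefore $A$ is nondegenerate iff $E(A)$ is, so $A$ is nondegenerately $G$-graded.

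It remains to compare exponents. By Observation~\ref{obs:equivalent_exp} we have $\exp(E(A))=\exp(W_L)=\exp(W)$, so I must prove $\exp(A)\le\exp(E(A))$. For this I would invoke the Giambruno--Zaicev description of the exponent. Write $A=B\oplus J$ with $J=\Rad(A)$ the Jacobson radical, which is a graded ideal and so usable for both the graded and the ungraded chain conditions, and $B=\bigoplus_i B_i$ a maximal semisimple $\mathbb{Z}_2$-graded subalgebra, each $B_i$ a simple superalgebra. Then $\exp(E(A))$ equals the largest $\dim_L(B_{i_1}\oplus\cdots\oplus B_{i_t})$ over distinct super-components admitting an ordering with $B_{i_1}JB_{i_2}\cdots JB_{i_t}\neq 0$, whereas $\exp(A)$ is the same maximum taken over ordinary simple components joined through $J$. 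Every simple superalgebra is either of type $M_{k,l}$ (a single ordinary component of the same dimension) or of type $Q_n=M_n(L\oplus Lc)$ (two ordinary $M_n$-components whose dimensions sum to $\dim_L Q_n$); in either case $\dim_L B_i$ is the total dimension of the ordinary components inside $B_i$. Consequently an optimal ordinary chain realizing $\exp(A)$ is covered by a collection of super-components of total dimension at least $\exp(A)$, and lifting the chain to these components yields $\exp(A)\le\exp(E(A))$.

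The crux, and the step I expect to be hardest, is exactly this lifting. When one $Q_n$ donates both of its ordinary $M_n$-blocks to the optimal ordinary chain, the two occurrences must be merged into the single super-component $Q_n$ without breaking the nonvanishing product $B_{i_1}JB_{i_2}\cdots\neq 0$ and while keeping the chosen super-components distinct, as the exponent formula demands. What makes this go through is that the two $M_n$-blocks of $Q_n$ are internally linked through the odd part $c\cdot M_n$, so the merged super-chain stays nonzero; verifying this carefully, via the graded Wedderburn--Malcev decomposition and the fact that $J$ is graded, is the technical heart of the argument. Once $\exp(A)\le\exp(W)$ is in hand, $W_0=A$ is finite dimensional, nondegenerately $G$-graded, and of exponent at most $\exp(W)$, as required.
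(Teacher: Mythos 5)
Your reduction is the one the paper uses: extend scalars, invoke the $G$-graded representability theorem of \cite{AB}, set $W_0=A$ with the $\mathbb{Z}_2$-grading forgotten, and transfer nondegeneracy from $E(A)$ to $A$ using the fact that products of homogeneous components of the infinite-dimensional Grassmann algebra never vanish. Your subspace computation for that transfer is correct and is essentially a linear-algebra rephrasing of the paper's argument, which is stated in the language of monomial identities (if all $2^n$ monomials $x_{(\epsilon_1,g_1),1}\cdots x_{(\epsilon_n,g_n),n}$ were identities of $A$, then $x_{g_1,1}\cdots x_{g_n,n}$ would be a $G$-graded identity of $E(A)$). The affine/Capelli remark also matches the paper.

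The divergence, and the gap, is the inequality $\exp(A)\le\exp(E(A))$. The paper does not prove it; it cites Giambruno--Zaicev (\cite{GZ}, proof of the main theorem, or \cite{Alja}, Theorem 2.3) for $\exp(C)\le\exp_{\mathbb{Z}_2}(C)=\exp(E(C))$. You instead sketch a direct proof from the two structural exponent formulas, and you correctly isolate the one nontrivial point --- merging the two ordinary $M_n$-blocks of a super-component of type $M_n(L\oplus Lc)$ into a single super-component of the chain without destroying the nonvanishing linked product or the distinctness requirement --- but you leave that step unproved, and the heuristic you offer (that the two blocks are ``internally linked through the odd part $cM_n$'') is not what makes it work. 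The correct mechanism is more elementary: since $J$ is an ideal, $JCJ\subseteq J$ for any subspace $C$, so in a nonzero product $C_{j_1}JC_{j_2}\cdots JC_{j_t}$ one may delete every occurrence of an ordinary component beyond the first occurrence of its ambient super-component, absorbing $JC_{j_b}J$ into $J$ (or $JC_{j_t}$ into $J$ when it is the last factor); the resulting product over the distinct super-components, each enlarged from $C_{j_k}$ to its super-component, contains the original nonzero product, and since $\dim_L M_n(L\oplus Lc)=2n^2$ equals the sum of the dimensions of its two ordinary blocks, the super-dimensions still sum to at least $\exp(A)$. With that filled in your argument closes; alternatively, simply cite the inequality as the paper does.
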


\begin{proof}

Let us consider first the case where $W$ is affine. Applying
\cite{AB} there exists a finite dimensional $G$-graded
algebra $B$ over a field extension $L$ of $F$ such that $\Id_{G}(W\otimes_{F} L)=\Id_{G}(B)$.
Clearly, we may assume that $L$ is algebraically closed by further extending the scalars if needed.
Next, by Observations \ref{obs:equivalent_nondeg} and \ref{obs:equivalent_exp}
we know that the $G$-grading on $B$ is nondegenerate and $\exp(W_{L})=\exp(B)$,
thus proving the proposition for this case.

Suppose now that $W$ is arbitrary (i.e., not necessarily affine). By
\cite{AB} there exists a finite dimensional $\mathbb{Z}_{2} \times
G$-graded algebra $C\cong \bigoplus_{(\epsilon, g)\in \mathbb{Z}_{2} \times
G}C_{(\epsilon, g)}$ over an (algebraically closed) field extension $L$ of $F$ such that $W_L$ is $G$-PI-equivalent to
$E(C)=(E_{0} \otimes C_{0}) \oplus (E_{1} \otimes C_{1})$ (the Grassmann envelope of $C$) where $C_0=\bigoplus_{g \in
G}C_{(0,g)}$ and $C_1=\bigoplus_{g \in
G}C_{(1,g)}$. The
$G$-grading on $E(C)$ is given by
$$E(C)_{g}=(E_{0}\otimes C_{(0,g)})\oplus (E_{1}\otimes C_{(1,g)}).$$
We claim that the $G$-grading on $C$ is nondegenerate (where $C_g=C_{(0,g)}\oplus C_{(1,g)}$).
To this end fix an $n$-th tuple $(g_{1}, \ldots, g_{n}) \in G^{(n)}$.
By linearity we need to show that at least one of the $2^{n}$ monomials of the form

$$x_{(\epsilon_{1}, g_{1}),1}x_{(\epsilon_{2}, g_{2}),2}\cdots x_{(\epsilon_{n}, g_{n}),n}$$
is not in $\Id_{\mathbb{Z}_{2} \times G}(C)$. Let us show that if this is
not the case, then the monomial $x_{g_{1},1}\cdots x_{g_{n},n}$ is a
$G$-graded identity of $E(C)$,
contradicting the fact that the $G$-grading on $E(C)$ and hence on
$W$ is nondegenerate. To see this consider the evaluation
$x_{g_i,i}=z_{0,i}\otimes a_{0,i} + z_{ 1,i}\otimes a_{1,i}$ for
$i=1,\ldots,n$ where $z_{\epsilon,i}\in E_{\epsilon}$ and $a_{\epsilon,i} \in C_{(\epsilon,g_i)}$. This evaluation yields an expression with $2^{n}$
summands of the form
$$z_{\epsilon_1,i_1}z_{\epsilon_2,i_2}\cdots z_{\epsilon_n,i_n} \otimes a_{(\epsilon_1,i_1)}a_{(\epsilon_2,i_2)}\cdots a_{(\epsilon_n,i_n)}
$$
which are all zero and the claim follows.

Finally, by a theorem of Giambruno and Zaicev (see \cite{GZ}, proof of main theorem  or \cite{Alja}, Theorem 2.3) we have that
$\exp(C)\leq \exp_{\mathbb{Z}_2}(C)=\exp(E(C))=\exp(W)$, which is
precisely what we need.

\end{proof}

Our next step is to reduce the main theorem from finite dimensional algebras to $G$-simple algebras.

\begin{proposition} \label{reduction from finite dimensional to G-simple}

Let $W$ be a finite dimensional PI $F$-algebra graded nondegenerately by a group $G$.
Then there exists a $G$-simple algebra $W_0$ such that $Id_G(W)\subseteq Id_G(W_0)$ and
the grading on $W_0$ is nondegenerate $($in fact $W_{0}$ is an homomorphic image of $W$$)$. In particular $\exp(W_0)\leq \exp(W)$.

\end{proposition}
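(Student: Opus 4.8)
The plan is to pass to the graded semisimple quotient of $W$, decompose it into $G$-simple factors, and isolate one factor on which the grading stays nondegenerate. Throughout, for a $G$-graded algebra $B$ and a tuple $\sigma=(h_1,\ldots,h_s)\in G^{(s)}$ I write $B_\sigma:=B_{h_1}B_{h_2}\cdots B_{h_s}$ for the corresponding product of homogeneous components; by Observation \ref{obs:equivalent_nondeg} the grading on $B$ is nondegenerate precisely when $B_\sigma\neq 0$ for every tuple $\sigma$. First I would take $J$ to be the graded Jacobson radical of $W$: since $W$ is finite dimensional, $J$ is a nilpotent $G$-graded ideal, say $J^m=0$, and $\bar W:=W/J$ is $G$-graded semisimple. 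By the graded Wedderburn decomposition underlying Theorem \ref{thm:simple_graded}, $\bar W$ is a direct product $\bar W=\bar W_1\times\cdots\times\bar W_k$ of $G$-simple algebras, and each composite $W\twoheadrightarrow \bar W\twoheadrightarrow \bar W_i$ is a surjective $G$-graded homomorphism. Thus every $\bar W_i$ is a $G$-simple homomorphic image of $W$, and it remains only to find one with a nondegenerate grading.

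The key step is to show that at least one $\bar W_i$ is nondegenerate, which I would prove by contradiction by concatenating witnesses of degeneracy and then invoking the nilpotency of $J$. Assume every $\bar W_i$ is degenerate; then for each $i$ there is a tuple $\tau_i\in G^{(r_i)}$ with $(\bar W_i)_{\tau_i}=0$. Consider the concatenation $\tau=\tau_1\tau_2\cdots\tau_k$. For any fixed $i$, associativity lets me group $(\bar W_i)_\tau$ as the block product $(\bar W_i)_{\tau_1}(\bar W_i)_{\tau_2}\cdots(\bar W_i)_{\tau_k}$, one of whose factors is the zero space $(\bar W_i)_{\tau_i}$; hence $(\bar W_i)_\tau=0$ for every $i$. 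Because distinct factors of a direct product annihilate one another, $\bar W_\tau=\bigoplus_i(\bar W_i)_\tau=0$, so $W_\tau$ lies in $J$. But then $(W_\tau)^m\subseteq J^m=0$, whereas $(W_\tau)^m=W_{\tau^m}$, the product along $\tau$ repeated $m$ times, is nonzero by the nondegeneracy of $W$ — a contradiction. Therefore some $\bar W_{i_0}$ is nondegenerate, and I put $W_0:=\bar W_{i_0}$.

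Finally I would record the numerical consequences. As $W_0$ is a $G$-graded homomorphic image of $W$, it satisfies all ($G$-graded) identities of $W$, giving $\Id_G(W)\subseteq\Id_G(W_0)$ and, forgetting the grading, $\Id(W)\subseteq\Id(W_0)$; hence $c_n(W_0)\le c_n(W)$ for all $n$ and $\exp(W_0)\le\exp(W)$. The main obstacle is exactly the passage from ``$\bar W$ is nondegenerate'' to ``a single $G$-simple factor is nondegenerate'': finitely many separate tuples each killing one factor do not obviously combine, and the concatenation trick is what manufactures a single tuple killing all factors at once, at which point the nilpotency of $J$ closes the argument. A secondary point to treat with some care is the structural input — that $J$ is graded and that $W/J$ is a product of $G$-simple algebras — which I would quote from the standard graded structure theory behind Theorem \ref{thm:simple_graded}.
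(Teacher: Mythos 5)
Your proof is correct and takes essentially the same route as the paper: pass to $W/J$, decompose the graded-semisimple quotient into $G$-simple factors, and combine concatenation of monomial (tuple) witnesses with the nilpotency of $J$ to show one factor stays nondegenerate. The only cosmetic difference is that you merge into a single contradiction what the paper presents as two separate claims (first that $W/J$ is nondegenerate, then that some $G$-simple factor is).
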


\begin{proof}
Denote by $J=J(W)$ the Jacobson radical of $W$. Since the
characteristic of the field is zero, it is known that $J$ is
$G$-graded and so $W/J$ is a semi-simple $G$-graded algebra (see
\cite{CoMon}).

We claim that the $G$-grading on $W/J$ is still nondegenerate.
Indeed, if $W/J$ satisfies a monomial identity $f$, then any
evaluation of this monomial on $W$ yields an element in $J$. Since
$J$ is nilpotent (say of nilpotency degree is $k$) we have that
the product of $k$ copies of $f$ (with distinct variables) is a
monomial identity of $W$. This contradicts the assumption the
$G$-grading on $W$ is nondegenerate and the claim is proved.

The algebra $W/J$ is $G$-semisimple and therefore a direct product
of $G$-simple algebra $\prod_1^n A_i$. If for each $i$ there is a
multilinear monomial identity $f_i$ of $A_i$, then the product
$\prod f_i$ is a multilinear monomial identity of $W/J$,
contradicting our assumption on the $G$-grading on $W/J$.
Consequently, there is an $i$ such that $A_{i}$ is nondegenerately
$G$-graded. Letting $W_{0}=A_{i}$ we have $Id_G(W)\subseteq
Id_G(W/J)\subseteq Id_G(W_0)$ as desired.

\end{proof}

In the next lemma we characterize (in terms of Bahturin,
Sehgal and Zaicev's theorem) when the grading on a $G$-simple algebra is nondegenerate.
Recall
that a $G$-grading on $A$ is \textit{strong} if for any $g, h \in G$ we
have $A_{g}A_{h}=A_{gh}$.

\begin{lemma} \label{lem:nondeg_simple}
Let $A\neq 0$ be a finite dimensional $G$-simple algebra. Then the following conditions are equivalent.

\begin{enumerate}

\item

The $G$-grading on $A$ is nondegenerate.

\item

The $G$-grading on $A$ is strong. In particular $A_{g} \neq 0$, for every $g\in G$.

\item

Let $F^{\alpha}H \otimes M_{r}(F)$ be a presentation of the
$G$-grading on $A$ $($as given by Theorem  \ref{thm:simple_graded}$)$ where $H$ is a finite
subgroup of $G$ and $(g_1,\ldots,g_{r})\in G^{(r)}$ is the $r$-tuple
which determines the elementary grading on $M_{r}(F)$. Then every
right coset of $H$ in $G$ is represented in the $r$-tuple.

\end{enumerate}

\end{lemma}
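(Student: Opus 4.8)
The plan is to prove the chain of implications $(2) \Rightarrow (1) \Rightarrow (3) \Rightarrow (2)$, since $(2) \Rightarrow (1)$ is essentially immediate. First I would observe that a strong grading trivially forces nondegeneracy: if $A_g A_h = A_{gh}$ for all $g,h$, then since $A \neq 0$ we have $A_e \neq 0$, and an easy induction shows every product $A_{g_1} \cdots A_{g_r} = A_{g_1 \cdots g_r} \neq 0$; in particular every homogeneous component is nonzero. So the content of the lemma lies in the remaining two implications, which is where I would concentrate the work.

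For $(1) \Rightarrow (3)$ I would argue by contraposition. Using the presentation $A \cong F^\alpha H \otimes M_r(F)$ from Theorem \ref{thm:simple_graded}, the homogeneous component of degree $g$ is spanned by the $U_h \otimes e_{i,j}$ with $g = g_i^{-1} h g_j$. Suppose some right coset $Hg_0$ of $H$ is \emph{not} represented among $g_1, \ldots, g_r$. The key computation is to track which degrees arise in a product of homogeneous components: a monomial $A_{h_1} A_{h_2} \cdots A_{h_s}$ (with $h_k \in G$) is built from products of elementary matrices $e_{i,j} e_{j',k} = \delta_{j,j'} e_{i,k}$, so the ``column index'' walks through the $g_i$'s and every intermediate degree forces $h_k = g_{i_{k-1}}^{-1} h g_{i_k}$ for some $h \in H$. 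I would exploit this to construct a tuple whose product of components must vanish: concretely, pick a single degree $g$ so that $A_g \neq 0$ is realized only with column index $i$ and row index $j$, then choose a degree that would require continuing into the missing coset. The cleanest route is to show directly that the set of group elements appearing as degrees of a \emph{nonzero} product is exactly $\bigcup_{i,j} g_i^{-1} H g_j$, and then exhibit a one-term tuple $(g)$ with $g \notin \bigcup_{i,j} g_i^{-1} H g_j$ for which $A_g = 0$, contradicting nondegeneracy. The missing-coset hypothesis is precisely what produces such a $g$.

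For $(3) \Rightarrow (2)$ I would show that if every right coset of $H$ is represented in the tuple, then $A_g A_h = A_{gh}$ for all $g, h$. Writing $g = g_i^{-1} u g_j$ and $h = g_k^{-1} v g_\ell$ with $u, v \in H$, the point is that to multiply a basis element landing in column $j$ against one starting in row $k$, I need $j = k$; the coset-representation hypothesis guarantees that for any prescribed degree I can choose the indices $i, j$ freely enough (every coset $Hg_j$ being hit) to match up the inner indices. I would verify that $(A_g A_h)$ contains each generator $U_w \otimes e_{i,\ell}$ of $A_{gh}$ by selecting an appropriate middle index $m$ with $g_m$ in the correct coset and using that $F^\alpha H$ is a twisted group algebra, where $U_u U_v = \alpha(u,v) U_{uv}$ is always a nonzero scalar multiple of $U_{uv}$ (this is where the fine grading on $F^\alpha H$ being ``crossed-product-like'' enters, ensuring no product of $H$-homogeneous pieces degenerates).

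The main obstacle I expect is the bookkeeping in $(1) \Rightarrow (3)$: correctly characterizing the set of realizable degrees of nonzero products and translating the combinatorial condition ``coset $Hg_0$ is unrepresented'' into an explicit vanishing tuple. The subtlety is that a single component $A_g$ may be realized by several index pairs $(i,j)$, so I must argue at the level of which \emph{cosets} $g_i^{-1} H g_j$ can be chained together, rather than individual indices; getting the quantifiers right — that an unrepresented coset blocks \emph{every} way of extending a product — is the crux. The twisted-algebra factor $F^\alpha H$ contributes only nonzero scalars and never causes vanishing, so all the genuine degeneration comes from the elementary $M_r(F)$ factor, which is the fact I would isolate and lean on throughout.
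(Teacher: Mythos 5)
Your overall strategy ($(2)\Rightarrow(1)\Rightarrow(3)\Rightarrow(2)$) is the same as the paper's, and your $(2)\Rightarrow(1)$ and $(3)\Rightarrow(2)$ arguments are essentially the paper's own (for the latter: to realize $U_h\otimes e_{i,j}$ of degree $g_i^{-1}hg_j=w_1w_2$ in $A_{w_1}A_{w_2}$, pick a middle index $k$ with $g_k\in Hg_iw_1=Hg_jw_2^{-1}$; the cocycle contributes only a nonzero scalar). The genuine gap is in your ``cleanest route'' for $(1)\Rightarrow(3)$: you propose to deduce from an unrepresented right coset the existence of a single $g\notin\bigcup_{i,j}g_i^{-1}Hg_j$, i.e.\ a degree-one monomial identity $A_g=0$. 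That implication is false. Take $G=\mathbb{Z}_4$, $H=\{0\}$, $\alpha$ trivial, $A=M_3(F)$ with elementary grading by the tuple $(0,1,2)$: the coset $\{3\}$ is unrepresented, yet every homogeneous component is nonzero ($A_0=\operatorname{span}\{e_{11},e_{22},e_{33}\}$, $A_1=\operatorname{span}\{e_{12},e_{23}\}$, $A_2=\operatorname{span}\{e_{13},e_{31}\}$, $A_3=\operatorname{span}\{e_{21},e_{32}\}$). The grading is indeed degenerate, but only because $A_1A_1A_1=0$, a length-three monomial; no length-one monomial vanishes. So the missing coset cannot be detected on the support of the grading, and the contradiction with nondegeneracy you aim for never materializes.

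What the paper does instead --- and what you would need to supply --- is an adaptive construction of a monomial of length $s$, where $Hz_1,\dots,Hz_s$ are the distinct represented cosets. Viewing matrices in $s\times s$ block form according to these cosets, one checks that if $U_h\otimes B$ is homogeneous of degree $g$ and the coset $Hz_ig$ has no representative among $z_1,\dots,z_s$, then the $i$-th block-row of $B$ must vanish. Fixing $z$ with $Hz$ unrepresented and setting $w_i=(z_iw_1\cdots w_{i-1})^{-1}z$, every partial product of homogeneous elements of degrees $w_1,\dots,w_i$ has zero $i$-th block-row, so after $s$ steps every block-row is dead and $x_{w_1,1}\cdots x_{w_s,s}$ is a graded monomial identity. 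You correctly identify as the crux that an unrepresented coset must block \emph{every} way of extending a product, but the one-term reduction you commit to does not meet that requirement; the argument genuinely needs the degrees $w_i$ chosen so that the obstruction propagates row by row through the partial products.
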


\begin{remark}
Note that in general (i.e. in case the algebra $A$ is not necessarily
$G$-simple) the first two conditions are not equivalent. For
instance (as mentioned in the introduction), the $\mathbb{Z}_{2}$-grading on the infinite dimensional
Grassmann algebra is nondegenerate but not strong. Indeed,
$E_{1}E_{1}\subsetneq E_{0}$ (or $E_{0}E_{0}\subsetneq E_{0}$ in
case the algebra $E$ is assumed to have no identity element).
\end{remark}
\begin{proof}
Note that since $A$ is assumed to be finite dimensional
$G$-simple, each one of the conditions (1)-(3) implies that $G$ is
finite. As for the $3$rd condition of the lemma we replace (as we
may by \cite{Al-Haile}, Lemma 1.3) the given presentation with
another so that the $r$-tuple has the following form

$$(g_{(1,1)},\dots,g_{(1,d_1)},
g_{(2,1)},\dots,g_{(2,d_2)},\ldots,\ldots g_{(s,1)},\dots,g_{(s,d_s)})$$
where
\begin{itemize}

\item

$r=d_{1}+\cdots + d_{s}$.

\item
$g_{i,1}=g_{i,2}=\ldots =g_{i,d_{i}}$ (denoted by $z_{i}$), and for $i \neq k$ the elements $g_{i,j}, g_{k,l}$
represent different right $H$-cosets in $G$.

\item $g_{1,j}=e$ for $j=1,\ldots,d_1$.\\

\end{itemize}

$(2)\rightarrow (1):$ This is clear.

$(1)\rightarrow(3):$ Suppose $(3)$ does not hold. We claim there
exists a multilinear monomial of degree at most $r$ which is a
$G$-graded identity of $F^{\alpha}H\otimes
M_{r}(F)=\span_{F}\{U_{h}\otimes e_{i,j}: h \in H, 1\leq i,j \leq
r\}$.

It is convenient to view the matrices in $M_{r}(F)$ as $s
\times s$ block matrices corresponding to the decomposition
$d_1+\cdots +d_s = r$. More precisely, let $D_k = d_1 + \cdots
+d_k$ and decompose $M_r(F)=\bigoplus_{i,j=1}^s M_{[i,j]}$ into
the direct sum of vector spaces $M_{[i,j]}=span \{e_{k,l} \mid
D_{i-1}<k\leq D_i,\; D_{j-1}<l\leq D_j \}$. Note that $M_{[i,j]}$
are submatrices supported on a single block of size $d_i \times
d_j$. This decomposition is natural in the sense that
$(F^{\alpha}H\otimes M_{r}(F))_g$ is the direct sum of the vector
spaces $U_h\otimes M_{[i,j]}$ such that $z_i^{-1}h z_j = g$.

For a fixed index $i \in \{1,\ldots,s\}$ and an element $g \in G$,
consider the equation $h z_j = z_i g$. It has a solution if and
only if $H z_i g$  has a representative in $(z_1 ,\ldots, z_s)$.
It follows that if $U_h \otimes B$ is homogeneous of degree $g$
and $H z_i g$ has no representative in $(z_1 ,\ldots, z_s)$, then
the $i$-th row of blocks in $B$ must be zero.

Consider the multilinear monomial
$$
x_{w_{1},1}x_{w_{2},2}\cdots x_{w_{n},n}
$$
where $x_{w_{i},i}$ is homogeneous of degree $w_{i} \in G$.
We will show there exist $w_{i} \in G, i=1,\ldots,n$, so that the monomial above is a $G$-graded identity.

To this end, note that such a monomial (being multilinear) is a $G$-graded identity if and
only if it is zero on graded assignments of the form
$x_{w_{i},i}=U_{h_{i}}\otimes A_{i}$ which span the algebra. In
particular the value of $x_{w_{1},1}x_{w_{2},2}\cdots x_{w_{n},n}$
under this assignment is $U_{h_{1}}\cdots U_{h_{n}}\otimes
A_{1}\cdots A_{n}$ which is zero if and only if $A=A_{1}\cdots
A_{n}=0$. It follows that if for any such homogeneous assignment,
the $i$-th row of blocks in the matrix $B_i = A_1 A_2 \cdots A_i$
is zero, then $A$ must be zero (since each of its blocks rows is
zero).

Following the argument above we choose $w_i \in G$ such that for
each $i$ the right coset $H z_i w_1 \cdots w_i $ (i.e. the right
coset of $H$ represented by $z_i$ times the homogeneous degree of
$U_{h_1}\cdots U_{h_i}\otimes B_i$) has no representative in $(z_1
,\ldots, z_s)$. Now, by assumption, there is some $z\in G$ such
that $Hz$ has no representative in $(z_1 ,\ldots, z_s)$. Thus,
choosing $w_i= (z_i w_1 \cdots w_{i-1})^{-1} z $ we obtain the
required result.

$(3)\rightarrow (2):$ Suppose that all right $H$-cosets are
represented in the tuple $(g_{1}, \ldots, g_{r})$. To show that
the grading is strong, it is enough to show that any basis element
$U_h\otimes e_{i,j}$ can be written as a product in
$A_{w_1}A_{w_2}$ where $w_1\cdot w_2=g_i ^{-1} h g_j$. Indeed,
since each right coset has a representative in the tuple $(g_1,\ldots,g_{r})\in G^{(r)}$, we can find $k$ such
that $g_{k} \in H g_i w_1=Hg_j w_2^{-1}$. Letting $h_1=g_{i} w_1
g_k^{-1}$ and $h_2 = g_k w_2 g_j ^{-1}$, we get that $a =
U_{h_1}\otimes e_{i,k},\; b = U_{h_2} \otimes e_{k,j}$  are in
$A_{w_1},A_{w_2}$ respectively and $a\cdot b= \alpha(h_1,h_2) U_h
\otimes e_{i,j}$. The lemma is now proved.
\end{proof}

Our next step is to pass from $G$-simple algebras to the group
algebra $FG$.

Let $V=\bigoplus V_g$ be a $G$-graded $F$-vector space. Then the
algebra of endomorphisms $End_{F}(V)$ has a natural $G$-grading
where an endomorphism $\psi\in End(V)$ has homogeneous degree $g$
if $\psi(V_h)\subseteq V_{gh}$ for every $h\in G$. In particular,
this grading on $End(FG)$ is isomorphic to the elementary grading
by a tuple $(g_1,\ldots,g_n)$ where each element of $G$ appears
exactly once. It is clear that the left regular action of $G$ on
$FG$ induces a natural $G$-graded embedding of $FG$ in
$End(FG)\cong M_{|G|}(F)$.

This statement can be generalized as follows.

\begin{lemma}
Let $G$ be a finite group, $H$ a subgroup and $
\{w_1,\ldots,w_k\}$ a complete set of representatives for the
right cosets of $H$ in $G$. Then the group algebra $FG$ can be
embedded in $FH\otimes M_k(F)$ where the tuple of the elementary
grading is $(w_1,\ldots,w_k)$.
\end{lemma}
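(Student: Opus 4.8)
The plan is to construct the embedding explicitly by having $G$ act on the right cosets, generalizing the regular-representation idea from the paragraph preceding the lemma. Let $\{w_1,\ldots,w_k\}$ be the chosen right coset representatives, so $G = \coprod_{i} H w_i$. Every element $g \in G$ factors uniquely as $g = h\, w_i$ with $h \in H$ and $i \in \{1,\ldots,k\}$; I will use this normal form throughout. The target algebra $FH \otimes M_k(F)$ has the $G$-grading from Theorem \ref{thm:simple_graded} with cocycle trivial (i.e. $\alpha = 1$, the honest group algebra $FH$) and elementary tuple $(w_1,\ldots,w_k)$, so its homogeneous component of degree $g$ is $\operatorname{span}_F\{U_h \otimes e_{i,j} : g = w_i^{-1} h w_j\}$.

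\smallskip

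\noindent\textbf{The map.} First I would define $\varphi \colon FG \to FH \otimes M_k(F)$ on the group basis and extend linearly. For $g \in G$, the idea is to send $U_g$ (the basis element of $FG$) to the matrix recording, for each pair $(i,j)$, how left multiplication by $g$ moves the coset $Hw_j$. Concretely, for each $j$ the element $w_j g$ has a unique normal form $w_j g = h_{ij}^{-1} w_i$ for a uniquely determined index $i = i(j)$ and element $h_{ij} \in H$; I then set
$$
\varphi(U_g) \;=\; \sum_{j=1}^{k} U_{h_{ij}} \otimes e_{i(j),\, j}.
$$
Since $g \mapsto i(j)$ is a bijection of $\{1,\ldots,k\}$ (left multiplication permutes the right cosets $Hw_j$), each column of $\varphi(U_g)$ has exactly one nonzero entry, so $\varphi(U_g)$ is (up to the $FH$-factors) a monomial permutation matrix. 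One checks directly from $w_i^{-1} h_{ij} w_j = g$ that $U_{h_{ij}} \otimes e_{i,j}$ lies in the degree-$g$ component, so $\varphi$ is a homomorphism of $G$-graded vector spaces.

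\smallskip

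\noindent\textbf{Multiplicativity and injectivity.} Next I would verify that $\varphi$ is an algebra homomorphism, i.e. $\varphi(U_g)\varphi(U_{g'}) = \alpha\text{-factor} \cdot \varphi(U_{gg'})$; this is the one genuine computation. Matrix multiplication $e_{i,j} e_{j',l} = \delta_{j j'} e_{i,l}$ forces the two normal-form factorizations to compose, and the resulting $U_h \cdot U_{h'} = U_{hh'}$ relation in $FH$ matches the normal form of $w_l (gg') = h'' w_i$ exactly because cosets compose associatively. (If one insists the map land in $FG \hookrightarrow \operatorname{End}(FG)$ it is cleaner still: restrict the regular representation of $G$ to the $H$-module structure, decompose $FG \cong FH \otimes F[H\backslash G]$ as an $FH$-module via the coset representatives, and observe that left translation by $G$ acts $FH$-linearly on the $k$-dimensional coset space, giving an algebra map into $\operatorname{End}_{FH}(FG) \cong FH \otimes M_k(F)$.) Injectivity is then immediate: the images $\{\varphi(U_g)\}_{g\in G}$ are linearly independent because distinct $g$ yield distinct supported positions $(i,j)$ together with distinct $H$-labels, so $\dim \varphi(FG) = |G| = k\,|H|$.

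\smallskip

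\noindent\textbf{The main obstacle.} The genuinely delicate point is bookkeeping the index permutation and the $H$-coordinate simultaneously: I must be careful that ``left multiplication by $g$ permutes right cosets $Hw_j$'' is applied on the correct side, and that the element $h_{ij}$ extracted from the normal form $w_j g = h_{ij}^{-1} w_{i}$ is assigned to the matrix slot $e_{i,j}$ in the order dictated by the convention $\Lambda_g = \operatorname{span}\{U_h \otimes e_{i,j} : g = w_i^{-1} h w_j\}$ of Theorem \ref{thm:simple_graded}. A sign error in which side acts, or transposing $e_{i,j}$ versus $e_{j,i}$, would break both gradedness and multiplicativity at once; everything else is a routine check that $\varphi$ respects the $G$-grading and the algebra structure.
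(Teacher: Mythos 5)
Your overall strategy is the same as the paper's: send $U_g$ to the monomial matrix over $FH$ that records how $g$ moves the right cosets, check that the map is graded and multiplicative, and deduce injectivity. However, the explicit formula you wrote down fails at precisely the point you flagged as delicate. With your normal form $w_j g = h_{ij}^{-1} w_{i(j)}$ one has $h_{ij} = w_{i(j)}\, g^{-1}\, w_j^{-1}$, hence
$$w_{i(j)}^{-1}\, h_{ij}\, w_j \;=\; g^{-1},$$
not $g$: the element $U_{h_{ij}}\otimes e_{i(j),j}$ lies in the homogeneous component of degree $g^{-1}$, so your $\varphi$ inverts the grading. The same slip breaks multiplicativity. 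Writing $\sigma_g$ for the permutation defined by $H w_j g = H w_{\sigma_g(j)}$, one has $\sigma_{gg'}=\sigma_{g'}\circ\sigma_g$; the matrix part of $\varphi(U_g)\varphi(U_{g'})$ is supported on the positions $(\sigma_g\sigma_{g'}(l),\,l)$, whereas that of $\varphi(U_{gg'})$ is supported on $(\sigma_{g'}\sigma_g(l),\,l)$. Carrying out the computation you deferred shows $\varphi(U_g)\varphi(U_{g'})=\varphi(U_{g'g})$, i.e.\ $\varphi$ is an algebra \emph{anti}-homomorphism, not a homomorphism. Your parenthetical alternative has the same orientation problem: left translation by $g$ does not act on the set of right cosets $Hw_j$ (the set $gHw_j$ is not a right coset of $H$), and for the decomposition $FG=\bigoplus_i (FH)w_i$ it is \emph{right} translation that is left-$FH$-linear.

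The error is local and easily repaired: factor $w_j g = h_j\, w_{i(j)}$ with $h_j\in H$ (no inverse) and place the entry in the slot $e_{j,\,i(j)}$, i.e.\ set $\varphi(U_g)=\sum_{j} U_{h_j}\otimes e_{j,\,i(j)}$. Then the degree of each summand is $w_j^{-1}h_j w_{i(j)}=g$, and the identities $h_{w,g_1g_2}=h_{w,g_1}h_{w^{g_1},g_2}$ and $w^{g_1g_2}=(w^{g_1})^{g_2}$ give multiplicativity; this is exactly the paper's map. (Equivalently, precomposing your $\varphi$ with the anti-automorphism $U_g\mapsto U_{g^{-1}}$ of $FG$ repairs both defects at once.) Your injectivity argument via linear independence of the images is fine and is in fact more elementary than the paper's appeal to the $G$-simplicity of $FG$, but as written the central formula does not define a graded algebra embedding.
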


\begin{proof}

For any $g\in G$ and any $H$-right coset representative $w\in
\{w_1,\ldots,w_k\}$, there are $h\in H$ and $w'\in
\{w_1,\ldots,w_k\}$ such that $wg=hw'$. We denote these elements
by $h:=h_{w,g}$ and $w':=w^g$. From associativity of $G$, we get
that
$$h_{w,g_1 g_2} = h_{w,g_1} h_{w^{g_1},g_2},\; \; w^{g_1 g_2}=(w^{g_1})^{g_2}.$$

Define a map $\psi:FG\to FH\otimes M_k(F)$ by
$$\psi (U_g) = \sum_{i=1}^{k} V_{h_{w_i,g}}\otimes E_{i,j(i)}$$
where $\{U_g\}_g$ and $\{V_h\}_h$ are the corresponding bases of
$FG$ and $FH$, $E_{i,j(i)}$ is the $(i,j)$ elementary matrix and
$j(i)$ is determined by the equation $w_{j(i)}= w_i^g$. It is easy
to show (left to the reader) that $\psi$ is a homomorphism.
Furthermore, by definition of the $G$-grading on $FH\otimes
M_k(F)$ (see Theorem \ref{thm:simple_graded}), we have that the
homogeneous degree of $V_{h_{w_i,g}}\otimes E_{i,j(i)}$ is
$w_i^{-1}h_{w_i,g}w_i^g=w_i^{-1}w_ig=g$, and hence $\psi$ is a
$G$-graded map. Finally, since $FG$ is $G$-simple and $\psi\neq
0$, it follows that $\psi$ is an embedding.

\end{proof}

Returning to our proof, we have a $G$-simple algebra $F^\alpha
H\otimes M_k(F)$, nondegenerately $G$-graded. Recall that this
means that any right coset of $H$ in $G$ appears at least once in
the tuple corresponding to the elementary grading $
\{w_1,\ldots,w_k\}$. If $\alpha=1$, then by the previous lemma the
group algebra $FG$ embeds in $FH\otimes M_k(F)$ and hence
$\Id(FG)\supseteq \Id(FH\otimes M_k(F))$. This proves the
reduction to $FG$ in that case.

In general (i.e. $\alpha$ not necessarily trivial), $FG$ may not
be $G$-graded embedded in $F^\alpha H\otimes M_k(F)$. We might
hope however, that even if such an embedding is not possible,
still $\exp(FG)\leq \exp(F^\alpha H\otimes M_k(F))$. It turns out
that this is also false as Example \ref{binary thetraeder} shows.

The next lemma shows how to get rid of the 2-cocycle $\alpha$.

\begin{lemma}
Let $A=F^\alpha H \otimes M_k(F)$ be a nondegenerate $G$-simple graded algebra.
Let $\rho:F^\alpha H\to M_d(F)$ be a nonzero $($ungraded$)$ representation
and denote by $B=M_d(F)$ the trivially $G$-graded algebra $($and therefore trivially $H$-graded$)$.
Then $FH$ can be embedded in $F^\alpha H\otimes B$ and
$FG$ can be embedded in $A\otimes B$ as $H$ and $G$-graded
algebras respectively.
\end{lemma}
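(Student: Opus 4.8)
The plan is to build the two embeddings in sequence, the first being the heart of the matter and the second a formal consequence of it together with the embedding $FG\hookrightarrow FH\otimes M_k(F)$ established in the previous lemma.

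First I would treat the embedding $FH\hookrightarrow F^{\alpha}H\otimes B$. Writing $\{V_h\}_{h\in H}$ for the untwisted basis of $FH$ and $\{U_h\}_{h\in H}$ for the basis of $F^{\alpha}H$ corresponding to $\alpha$, I look for a map of the form $V_h\mapsto U_h\otimes C_h$ with $C_h\in B$. Since $U_{h_1}U_{h_2}=\alpha(h_1,h_2)U_{h_1h_2}$ and $B$ sits in degree $e$, such a map is an $H$-graded algebra homomorphism precisely when $\alpha(h_1,h_2)\,C_{h_1}C_{h_2}=C_{h_1h_2}$, i.e. when $\{C_h\}$ is a projective representation of $H$ with cocycle $\alpha^{-1}$. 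The given representation $\rho$ only furnishes a projective representation $h\mapsto\rho(U_h)$ with cocycle $\alpha$ (as $\rho(U_{h_1})\rho(U_{h_2})=\alpha(h_1,h_2)\rho(U_{h_1h_2})$), so the crucial step is to invert the cocycle. This is exactly what the contragredient achieves: after normalizing $\alpha$ so that $U_e=1$ and replacing $\rho$ by a unital representation (equivalently, restricting its codomain to $\rho(1)M_d(F)\rho(1)$), each $\rho(U_h)$ is invertible, and setting $C_h:=\left(\rho(U_h)^{-1}\right)^{T}$ one checks that $C_{h_1}C_{h_2}=\alpha(h_1,h_2)^{-1}C_{h_1h_2}$, as required. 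Injectivity of $V_h\mapsto U_h\otimes C_h$ is immediate, since distinct $V_h$ land in distinct homogeneous components $FU_h\otimes B$ and each $C_h$ is invertible, hence nonzero.

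With the first embedding in hand, the embedding $FG\hookrightarrow A\otimes B$ follows by tensoring. By nondegeneracy of the $G$-grading on $A$ (Lemma \ref{lem:nondeg_simple}) every right coset of $H$ in $G$ occurs in the defining tuple of $A$, so I would choose a complete set of coset representatives $\{w_1,\ldots,w_k\}$ from among the entries of that tuple. The previous lemma then gives a $G$-graded embedding $FG\hookrightarrow FH\otimes M_k(F)$ for the elementary grading by $(w_1,\ldots,w_k)$, while selecting the corresponding $k$ positions of the tuple of $A$ yields a $G$-graded block inclusion $\iota:M_k(F)\hookrightarrow M_r(F)$ whose source and target tuples agree on the chosen positions. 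Composing $FG\hookrightarrow FH\otimes M_k(F)$ with $\phi\otimes\iota$, where $\phi:FH\hookrightarrow F^{\alpha}H\otimes B$ is the embedding above, and rearranging tensor factors via $(F^{\alpha}H\otimes B)\otimes M_r(F)\cong F^{\alpha}H\otimes M_r(F)\otimes B=A\otimes B$, produces the desired map.

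The routine point to verify is that this composite is $G$-graded: the element $V_h\otimes E_{i,j}$ of $FH\otimes M_k(F)$ has $G$-degree $w_i^{-1}hw_j$, and its image in $A\otimes B$ carries the same degree, because $C_h\in B$ lies in degree $e$ while the matching basis element of $A$ has degree $w_i^{-1}hw_j$ by Theorem \ref{thm:simple_graded}. The main obstacle is precisely the cocycle inversion of the first paragraph, namely recognizing that the twist of $F^{\alpha}H$ is absorbed not by $\rho$ itself but by its contragredient, whose cocycle is $\alpha^{-1}$; once this is seen, the remainder is bookkeeping with tensor factors and degrees.
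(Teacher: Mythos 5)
Your proof is correct and follows essentially the same route as the paper: your $C_h=\left(\rho(U_h)^{-1}\right)^{T}$ is exactly the paper's map $\psi(U_h)=V_h\otimes\rho(V_h^{-1})^{t}$, i.e.\ the contragredient absorbing the cocycle, and the second embedding is obtained by composing with the previous lemma just as in the paper. The only differences are cosmetic refinements (injectivity via invertibility of the $C_h$ in each homogeneous component rather than via $H$-simplicity of $FH$, and the explicit block inclusion handling repeated coset entries in the elementary tuple), which fill in details the paper leaves implicit.
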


\begin{proof}
Define the map $\psi:FH\to F^\alpha H\otimes M_d(F)$ by
$\psi(U_h)=V_h\otimes \rho(V_h^{-1})^t$, where $\{U_h\}_h$ and $\{V_h\}_h$ are
the corresponding bases of $FH$ and $F^\alpha H$. This is
easily checked to be an $H$-graded homomorphism, and it is an embedding
since $FH$ is $H$-simple. This proves the first claim of the lemma.

The second claim follows from the last lemma using the graded
embeddings
$$FG\hookrightarrow FH\otimes M_k(F)\hookrightarrow F^\alpha H\otimes M_d(F) \otimes M_k(F)\cong A\otimes B.$$

 \end{proof}

\begin{corollary} \label{reduction from G-simple to group algebra}
Let $A=F^\alpha H \otimes M_k(F)$ be a nondegenerate $G$-simple graded algebra.
Then $\exp(FG)\leq \exp(A)^2$.
\end{corollary}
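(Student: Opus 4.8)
The plan is to feed the embedding supplied by the preceding lemma into the monotonicity of the exponent under subalgebras, and then reduce everything to a Wedderburn computation. First I would choose the auxiliary representation $\rho$ in the preceding lemma to be an \emph{irreducible} representation of the twisted group algebra $F^{\alpha}H$, and write $d=\dim\rho$, $B=M_{d}(F)$. Since $F^{\alpha}H$ is the twisted group algebra of a finite group over a field of characteristic zero, it is semisimple and decomposes as an ungraded algebra into $F^{\alpha}H\cong\bigoplus_{i}M_{n_{i}}(F)$; its irreducible representations are exactly the projections onto these factors, so $d=n_{j_{0}}\le m$, where I set $m:=\max_{i}n_{i}$.

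By the preceding lemma, $FG$ embeds in $A\otimes B$ as a $G$-graded algebra, and in particular as an ungraded subalgebra. Hence $\Id(A\otimes B)\subseteq\Id(FG)$, which forces $c_{r}(FG)\le c_{r}(A\otimes B)$ for every $r$ and therefore $\exp(FG)\le\exp(A\otimes B)$. The whole problem is thus reduced to the estimate $\exp(A\otimes B)\le\exp(A)^{2}$.

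For this last inequality I would simply read both exponents off the Wedderburn decompositions, using the fact recalled in Section \ref{Background and some preliminary reductions} that the exponent of a direct sum of matrix algebras is the square of the size of its largest block. Since $A=F^{\alpha}H\otimes M_{k}(F)\cong\bigoplus_{i}M_{n_{i}k}(F)$ we get $\exp(A)=(mk)^{2}$, and since $A\otimes B\cong F^{\alpha}H\otimes M_{kd}(F)\cong\bigoplus_{i}M_{n_{i}kd}(F)$ we get $\exp(A\otimes B)=(mkd)^{2}$. The choice of $\rho$ guarantees $d\le m\le mk=\sqrt{\exp(A)}$, whence $mkd\le(mk)^{2}$ and consequently $\exp(A\otimes B)=(mkd)^{2}\le(mk)^{4}=\exp(A)^{2}$. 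Combined with the previous paragraph this yields $\exp(FG)\le\exp(A)^{2}$.

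I do not expect a genuine obstacle once the embedding of the preceding lemma is available; the remaining argument is bookkeeping. The one point that must be handled with care is the \emph{direction} of the exponent comparison: the embedding makes $FG$ the \emph{smaller} algebra, so it is $\Id(A\otimes B)\subseteq\Id(FG)$ and the exponent of $FG$ is bounded from above. The conceptual heart of the estimate is that passing from $A$ to $A\otimes B$ inflates the exponent by the factor $d^{2}$, and because an irreducible $\rho$ can always be taken of dimension $d\le\sqrt{\exp(A)}$, this extra factor is at most $\exp(A)$, which is exactly what produces the square in the bound $\exp(A)^{2}$.
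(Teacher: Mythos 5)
Your proposal is correct and follows essentially the same route as the paper: the paper also feeds an irreducible representation $\rho$ of $F^{\alpha}H$ (of dimension $d$ with $d^{2}=\exp(F^{\alpha}H)\leq\exp(A)$) into the embedding lemma and concludes $\exp(FG)\leq\exp(A\otimes M_{d}(F))=\exp(A)\,d^{2}\leq\exp(A)^{2}$. Your explicit Wedderburn-block computation of $\exp(A\otimes B)=(mkd)^{2}$ is just an unpacked form of the multiplicativity step the paper uses, so there is no substantive difference.
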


\begin{proof}
Recall that the exponent of $F^\alpha H$ is $d^{2}$, where $d$ is the dimension of the
its largest irreducible representation. It follows that $FG$ can be
embedded in $A\otimes M_d(F)$ where $d^2=\exp(F^\alpha H)\leq \exp(A)$
and therefore
$$\exp(FG)\leq \exp(A\otimes M_d(F))= \exp(A) \exp(M_d(F))\leq\exp(A)^2.$$
\end{proof}

\begin{corollary} \label{bounding the exponent and PI degree finite groups}

Let $W$ be an associative PI $F$-algebra nondegenerately $G$-graded. Then the following hold.
$$\exp(FG) \leq \exp(W)^{2}$$
and
$$d(FG)\leq 2(d(W)-1)^{2}.$$ $($$d(W)$ denotes the PI degree$)$

\end{corollary}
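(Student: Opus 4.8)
The plan is to chain together the three reduction results proved above and then convert the resulting exponent bound into a bound on the PI degree. Starting from a PI-algebra $W$ that is nondegenerately $G$-graded (with $G$ finite), Proposition \ref{reduction from arbitrary to finite dimensional algebras} produces a finite dimensional algebra $W_{0}$, over some field extension $L/F$, that is still nondegenerately $G$-graded and satisfies $\exp(W_{0})\leq \exp(W)$. Since extension of scalars changes neither the exponent nor the PI degree of any algebra involved, I may pass to $L$ and apply Proposition \ref{reduction from finite dimensional to G-simple} to $W_{0}$, obtaining a nondegenerate $G$-simple algebra $A$ with $\exp(A)\leq \exp(W_{0})\leq \exp(W)$. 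By Theorem \ref{thm:simple_graded} this algebra has the form $F^{\alpha}H\otimes M_{r}(F)$, and by Lemma \ref{lem:nondeg_simple} its nondegeneracy is precisely the hypothesis needed to apply Corollary \ref{reduction from G-simple to group algebra}, which then gives $\exp(FG)\leq \exp(A)^{2}$. Combining this with $\exp(A)\leq \exp(W)$ yields $\exp(FG)\leq \exp(W)^{2}$, the first assertion.

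For the bound on the PI degree I would exploit that $FG$ is itself a finite dimensional $G$-simple algebra: its canonical grading is fine, with trivial cocycle and $H=G$, so every homogeneous component is spanned by an invertible element. Hence the exact relation $\exp(FG)=\frac{1}{4}\bigl(d(FG)\bigr)^{2}$ holds, that is $d(FG)=2\sqrt{\exp(FG)}$. Feeding in the exponent bound just established together with the general estimate $\exp(W)\leq (d(W)-1)^{2}$ valid for any PI-algebra, I obtain
$$d(FG)=2\sqrt{\exp(FG)}\leq 2\sqrt{\exp(W)^{2}}=2\exp(W)\leq 2(d(W)-1)^{2},$$
which is the second assertion. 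Since every step is a direct citation of an earlier result, I do not expect a real obstacle; the only points demanding care are the bookkeeping of the field extension $L$ (confirming that all four invariants $\exp(FG)$, $\exp(W)$, $d(FG)$, $d(W)$ are insensitive to it) and the recognition that $FG$ qualifies for the sharp $G$-simple relation $d=2\sqrt{\exp}$ rather than only the weaker general bound $\exp\leq (d-1)^{2}$.
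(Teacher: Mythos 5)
Your proposal is correct and follows essentially the same route as the paper: the first inequality is obtained by chaining Propositions \ref{reduction from arbitrary to finite dimensional algebras} and \ref{reduction from finite dimensional to G-simple} with Corollary \ref{reduction from G-simple to group algebra}, and the second by combining $d(FG)=2\sqrt{\exp(FG)}$ (which the paper derives from semisimplicity of $FG$ and Amitsur--Levitzki, equivalent to your appeal to the $G$-simple relation $\exp=\tfrac14 d^2$) with the general bound $\exp(W)\leq (d(W)-1)^2$.
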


\begin{proof}
The first inequality follows from Propositions \ref{reduction from arbitrary to finite dimensional algebras},
\ref{reduction from finite dimensional to G-simple} and Corollary \ref{reduction from G-simple to group algebra}.
For the proof of the second inequality recall that in general $\exp(W)\leq (d(W)-1)^2$
(see Theorem 4.2.4 in \cite{GiamZai}). Now, since $FG$ is semisimple, it follows by Amitsur-Levitzky theorem that $\frac{1}{2}d(FG)=\sqrt{\exp(FG)}$ and so, we conclude that

$$d(FG)=2\sqrt{\exp(FG)}\leq 2\exp(W)\leq 2(d(W)-1)^2.$$
\end{proof}

The last step in our analysis concerns with group algebras. Here
we refer to the following result of D. Gluck (see \cite{Glu}) in which he bounds
the minimal index of an abelian subgroup $U$ in $G$ in terms of
the maximal character degree of $G$. We emphasize that the proof
uses the classification of finite simple groups.

\begin{theorem}$($D. Gluck$)$
There exists a constant $m$ with the following property. For any
finite group $G$ there exists an abelian subgroup $U$ of $G$ such
that $[G:U] \leq b(G)^{m}$, where $b(G)$ is the largest
irreducible character degree of $G$.
\end{theorem}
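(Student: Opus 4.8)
The plan is to derive the polynomial bound from two analyses of very different character: controlling the index $[G:F(G)]$ of the Fitting subgroup $F(G)$, which is the deep part and where the classification of finite simple groups enters, and then bounding the index of an abelian subgroup inside the nilpotent group $F(G)$, which is elementary. Note first that the inequality one gets for free runs the wrong way: for any abelian subgroup $A\le G$ and any $\chi\in\Irr(G)$, restriction to $A$ writes $\chi|_A$ as a sum of linear characters, so $\chi(1)\le[G:A]$, and in particular $b(G)\le[G:A]$ for the abelian subgroup of smallest index. The theorem asserts the reverse polynomial bound, and essentially all of the content lies in producing a subgroup of small index whose structure we can control, namely $F(G)$, and then descending to an abelian subgroup.

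The core step is to show $[G:F(G)]\le b(G)^{c_1}$ for an absolute constant $c_1$, where I first reduce to the case that $G$ is solvable: the classification-based lower bounds on the smallest faithful character degree of a finite simple group show that any nonabelian composition factor forces a character degree growing with its order, so — all degrees being at most $b(G)$ — the nonsolvable part of $G$ lives in a section of bounded index and may be stripped away. In the solvable case $G/F(G)$ acts faithfully on $V=F(G)/\Phi(F(G))$, and Clifford theory over the normal subgroup $V$ in the quotient $G/\Phi(F(G))$ links character degrees to this action: if $G/F(G)$ has an orbit on the dual $\widehat V$ of size $t$, then there is an irreducible character of $G/\Phi(F(G))$, hence of $G$, of degree at least $t$, so $b(G)\ge t$. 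To turn this into a bound on $[G:F(G)]$ one needs an orbit of size comparable to $|G/F(G)|$ — ideally a regular orbit. Establishing the existence of such a large orbit for a faithful linear action is the \emph{orbit theorem}, and this is where the classification is indispensable and, I expect, where the real difficulty lies: it forces a case analysis over the primitive linear groups together with the numerical character-degree data supplied by the classification.

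Finally, since $F(G)\trianglelefteq G$, every irreducible character of $F(G)$ is a constituent of the restriction of some $\chi\in\Irr(G)$, so $b(F(G))\le b(G)$. Within the nilpotent group $F(G)$ one can then bound the index of an abelian subgroup by a fixed power of its largest character degree, with no appeal to the classification: passing to a maximal abelian normal subgroup $A$, which in a nilpotent group is self-centralizing, one estimates the orbit sizes of $F(G)/A$ on $\widehat A$ against character degrees exactly as above, producing an abelian $A$ with $[F(G):A]\le b(G)^{c_2}$. Multiplying the two estimates, $[G:A]=[G:F(G)]\,[F(G):A]\le b(G)^{c_1+c_2}$, so the theorem holds with $m=c_1+c_2$.
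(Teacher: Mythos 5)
The paper itself offers no proof of this statement: it is imported verbatim from \cite{Glu} as a black box, with the explicit remark that its proof depends on the classification of finite simple groups. So the only meaningful comparison is with Gluck's published argument. Measured against that, the first half of your sketch is a fair outline of the real proof: the easy inequality $b(G)\le [G:A]$ going the wrong way, the reduction to bounding $[G:F(G)]$ by a power of $b(G)$, the use of the classification to strip away nonabelian composition factors, and the conversion of large orbits of $G/F(G)$ on the dual of $F(G)/\Phi(F(G))$ into character degrees via Clifford theory are all genuinely the architecture of \cite{Glu}. (One quibble: the large-orbit theorems for \emph{solvable} groups acting faithfully and completely reducibly do not need the classification; the classification enters in handling the components of $G$, i.e.\ in the reduction to the solvable case.)

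The genuine gap is in your treatment of the nilpotent group $F(G)$. You take a maximal abelian normal subgroup $A$ (so $C_{F(G)}(A)=A$) and propose to bound $[F(G):A]$ by estimating the orbits of $Q=F(G)/A$ on $\widehat{A}$ ``exactly as above.'' The Clifford-theoretic inequality $b\ge(\hbox{orbit size})$ is fine, but the step you are implicitly relying on --- that a faithful action of $Q$ on $\widehat{A}$ has an orbit of size at least $|Q|^{1/c_2}$ for a universal $c_2$ --- is false in this non-coprime setting of a $p$-group acting on a $p$-group; complete reducibility, which makes the orbit theorem work for $G/F(G)$ on $F(G)/\Phi(F(G))$, is exactly what is missing inside $F(G)$. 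Concretely, let $N=U_m(p)$ be the group of $m\times m$ upper unitriangular matrices and let $A\cong \mathbb{F}_p^{\,m-1}$ be the subgroup supported on the last column. Then $A$ is a maximal abelian (normal, self-centralizing) subgroup, $N/A\cong U_{m-1}(p)$ acts on $A$ by the natural action, $[N:A]=p^{(m-1)(m-2)/2}$, and yet every orbit of $N/A$ on $\widehat{A}$ has size at most $p^{\,m-2}$. Your argument therefore certifies only $b(N)\ge p^{\,m-2}$, which bounds $[N:A]$ by no fixed power of the quantity you computed as $m\to\infty$. (Gluck's theorem does hold for $U_m(p)$ --- the abelian subgroup of block upper triangular unipotent type has index comparable to $b(N)$ --- but that subgroup is not normal and is not detected by your orbit count over $A$.) The nilpotent case thus requires a separate argument, and as written your constant $c_2$ does not exist.
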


We can now complete the proof of the main theorem for finite groups.

We note that by Giambruno and Zaicev's result  $\exp(FG)=b(G)^{2}$ and
hence, any finite group has an abelian subgroup $U$ with $[G:U]
\leq b(G)^{m} = \exp(FG)^{m/2}$. Combining with our results above,
we see that if a PI-algebra $W$ admits a nondegenerate $G$-grading
where $G$ is a finite group, then $\exp(FG)\leq \exp(W)^2$, hence
then there is an abelian subgroup $U$
with $[G:U] \leq \exp(W)^{m}$. In particular,
taking $K=m$ where $m$ is determined by the theorem above, will
do.

\end{section}

\begin{section}  {\label{sec:infinite} Proof of main theorem-Infinite groups}

In this section we prove the main theorem for arbitrary groups.
Let us sketch briefly the structure of our proof. In the preceding
section we proved the main theorem for arbitrary finite groups.
Our first step in this section is to prove the main theorem for
groups which are finitely generated and residually finite. Next,
we pass to finitely generated groups (not necessarily residually
finite) by the following argument. Any group $G$ which grades
nondegenerately a PI algebra is permutable and hence being
finitely generated, it is abelian by finite (see \cite{CLMR} or
Remark \ref{f.g. or non f.g. rewrite-permute}) and hence
residually finite. Finally we show how to pass from finitely
generated groups to arbitrary groups. We emphasize that the
constant $K$ (which appears in the main theorem) remains unchanged
when passing from finite groups to arbitrary group.

\begin{proposition}
Suppose the main theorem holds for arbitrary finite groups with
the constant $K$, that is, for any finite group $G$ and any PI
algebra $W$ which is nondegenerately $G$-graded, there exists an
abelian subgroup $U \subseteq G$ with $[G:U] \leq \exp(W)^{K}$.
Then the main theorem holds for finitely generated residually
finite groups with the same constant $K$.
\end{proposition}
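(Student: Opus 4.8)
The plan is to deduce the residually finite case from the finite case by \emph{coarsening} the grading along finite quotients and then gluing the resulting finite-index subgroups back together. Let $G$ be finitely generated and residually finite, let $W$ be a PI-algebra carrying a nondegenerate $G$-grading, and set $M=\exp(W)^{K}$. First I would observe that for each finite-index normal subgroup $N\trianglelefteq G$ the $G$-grading may be pushed down to a $G/N$-grading by setting $W_{gN}=\bigoplus_{h\in gN}W_{h}$. This coarsened grading is again nondegenerate: for any tuple of cosets $\bar g_{1},\dots,\bar g_{r}$ we have $W_{\bar g_{1}}\cdots W_{\bar g_{r}}\supseteq W_{g_{1}}\cdots W_{g_{r}}\neq 0$, the latter being nonzero by nondegeneracy of the original grading. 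Crucially, $\exp(W)$ is an \emph{ungraded} invariant (it depends only on $\Id(W)$), so it is unchanged when we regard $W$ as $G/N$-graded rather than $G$-graded. Applying the hypothesized finite-group case to the finite group $G/N$ therefore yields an abelian subgroup $\bar U\subseteq G/N$ with $[G/N:\bar U]\leq \exp(W)^{K}=M$. Pulling $\bar U$ back to its preimage $U_{N}\subseteq G$ gives a subgroup with $N\subseteq U_{N}$, $[G:U_{N}]\leq M$, and $[U_{N},U_{N}]\subseteq N$.

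Next I would assemble these $N$-dependent subgroups into a single abelian one. Since $G$ is finitely generated, it has only \emph{finitely many} subgroups of index at most $M$; call this finite collection $\mathcal S$. For each finite-index normal $N$ put $\mathcal S_{N}=\{U\in\mathcal S : [U,U]\subseteq N\}$, which is nonempty because it contains $U_{N}$. The family $\{\mathcal S_{N}\}$ has the finite intersection property: given $N_{1},\dots,N_{k}$, their intersection $N=N_{1}\cap\cdots\cap N_{k}$ is again finite-index normal, and $\mathcal S_{N}\subseteq \mathcal S_{N_{1}}\cap\cdots\cap\mathcal S_{N_{k}}$, so the latter is nonempty. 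A family of nonempty subsets of the finite set $\mathcal S$ with the finite intersection property has nonempty total intersection, so there is a single $U\in\bigcap_{N}\mathcal S_{N}$. This $U$ satisfies $[G:U]\leq M$ and $[U,U]\subseteq N$ for \emph{every} finite-index normal $N$. Residual finiteness gives $\bigcap_{N}N=\{e\}$, whence $[U,U]=\{e\}$; that is, $U$ is an abelian subgroup of $G$ with $[G:U]\leq \exp(W)^{K}$, exactly as required, and with the same constant $K$.

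The two routine ingredients here are the behavior of coarsening (nondegeneracy is preserved because coarse homogeneous components contain the fine ones, and $\exp$ is grading-independent) and the standard fact that a finitely generated group has finitely many subgroups of each bounded index. The real content, and the step I expect to be the main obstacle, is the passage from the collection $\{U_{N}\}$ to one subgroup $U$ that is abelian outright rather than merely abelian modulo each $N$. This is precisely where finite generation and residual finiteness must be combined: finite generation makes $\mathcal S$ finite so that the finite-intersection-property compactness argument applies, and residual finiteness collapses the nested commutators $[U,U]\subseteq N$ down to the trivial group. Neither hypothesis can be dropped at this stage, which is why the later sections must first reduce general finitely generated groups to the residually finite case (via permutability and the abelian-by-finite structure) before handling arbitrary groups.
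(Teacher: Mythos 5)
Your proof is correct and is essentially the paper's argument: both rest on coarsening the grading to $G/N$ for finite-index normal $N$, Hall's theorem giving finitely many subgroups of index at most $\exp(W)^{K}$, and residual finiteness to conclude. The only difference is presentational --- the paper argues by contradiction, choosing a single $N$ avoiding one witness commutator from each non-abelian candidate subgroup, whereas you run the direct contrapositive via the finite intersection property; these are the same argument.
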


\begin{proof}
Since $G$ is finitely generated, by Hall's theorem \cite{Ha} there
are finitely many subgroups of index $\leq\exp(A)^{K}$. Denoting
these groups by $U_{1},\ldots,U_{n}$, we wish to show that one of
them is abelian. Suppose the contrary holds. Hence we can find
$g_{i},h_{i}\in U_{i}$ such that $e\neq\left[g_{i},h_{i}\right]$
for any $i=1,\ldots,n$ and we let $N$ be a normal subgroup of
finite index which
doesn't contain any of the $\left[g_{i},h_{i}\right]$. \\
Define an induced $G/N$ grading on $A$ by setting
$A_{gN}=\bigoplus_{h\in N} A_{gh}$. Clearly, the induced
${G}/{N}$-grading on $A$ is nondegenerate, thus by the main
theorem there is some $U\leq G$ (containing $N$) such that
$\left[G:U\right]\leq\exp(A)^{K}$ and ${U}/{N}$ is abelian. By the
construction, $U=U_{i}$ for some $i$, and we get that
$\left[g_{i},h_{i}\right]\in N$ - a contradiction.
\end{proof}

The next step is to remove the condition of residually finiteness.

\begin{proposition}
Suppose the main theorem holds for finitely generated residually
finite groups with the constant $K$. Then the main theorem holds
for arbitrary finitely generated groups with the same constant
$K$.
\end{proposition}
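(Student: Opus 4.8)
The plan is to observe that the hypothesis already covers every finitely generated group we could possibly encounter: any finitely generated group admitting a nondegenerate grading on a PI-algebra is automatically residually finite. So there is nothing genuinely new to prove here, only a packaging together of the permutability implication with the standard structure theory for permutable groups.

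Concretely, let $G$ be an arbitrary finitely generated group and let $W$ be a PI-algebra, say of PI degree $n$, that is nondegenerately $G$-graded. First I would invoke the implication $T_n \Rightarrow P_n$ noted in the introduction (the same evaluation argument that gives $PI_n \Rightarrow P_n$): since the grading is nondegenerate, $\Id_G(W)$ contains no multilinear monomial, so substituting an arbitrary $n$-tuple $(g_1,\dots,g_n)\in G^{(n)}$ into a degree-$n$ multilinear identity of $W$ forces $g_1\cdots g_n = g_{\sigma(1)}\cdots g_{\sigma(n)}$ for some nontrivial $\sigma\in \Sym(n)$. Hence $G$ is $n$-permutable, i.e. $G$ satisfies $P_n$.

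Next, because $G$ is finitely generated and permutable, it is abelian-by-finite; this is the structure theorem for finitely generated permutable groups (see \cite{CLMR} or Remark \ref{f.g. or non f.g. rewrite-permute}). A finitely generated abelian-by-finite group has a finitely generated abelian normal subgroup of finite index, hence is polycyclic-by-finite, and therefore residually finite. Thus $G$ lies in the class for which the main theorem is assumed to hold, and applying the hypothesis produces an abelian subgroup $U\le G$ with $[G:U]\le \exp(W)^K$, with exactly the same constant $K$ --- which is the assertion.

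The reason the reduction goes through, and the only place where anything nontrivial is used, is that nondegeneracy of the grading is precisely what forces permutability, which in the finitely generated setting forces residual finiteness. For weaker conditions on the grading (merely connected, or bounded nondegenerate) there would be no reason for $G$ to be permutable and the argument would break down. So the \emph{hard part} is not in this proposition itself but in the two classical group-theoretic inputs being invoked: that finitely generated permutable groups are abelian-by-finite, and that finitely generated abelian-by-finite groups are residually finite. Both are supplied by the cited literature, so the proof reduces to assembling these facts in the order above.
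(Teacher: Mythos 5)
Your proposal is correct and follows essentially the same route as the paper: establish that nondegeneracy forces $G$ to be $n$-permutable (the paper does this by decomposing an evaluated multilinear identity into its $G$-homogeneous components and noting the component of degree $g_1\cdots g_n$ cannot be a single monomial), then invoke \cite{CLMR} to get abelian-by-finite and hence residually finite, so the hypothesis applies with the same $K$. The only difference is cosmetic: the paper's proof environment spells out only the permutability step and leaves the ``abelian-by-finite $\Rightarrow$ residually finite'' chain to the section's introductory sketch, whereas you make that chain explicit.
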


\begin{proof}
As mentioned above this is obtained using permutability.

Let $G$ be a finitely generated group and suppose it grades
nondegenerately a PI algebra $A$. Let us show that $G$ must be
permutable. To this end let $f=\sum c_\sigma x_{\sigma(1),1}\cdots
x_{\sigma(n),n}$ be a nonzero ordinary identity of $A$ and assume
that $c_{id}=1$. Fix a tuple $g_1,\ldots,g_n\in G$ and consider
the graded identity
$$\tilde{f}=f(x_{g_1,1},\ldots,x_{g_n,n})=\sum _{h\in G} f_h(x_{g_1,1},\ldots,x_{g_n,n})$$
where ${f}_h$ is the $h$ homogenous part of $\tilde{f}$. Since
$\tilde{f}$ is a graded identity, its homogenous parts are also
graded identities. Letting $g=g_1 \cdots g_n$, the polynomial
${f}_g$ contains the monomial $x_{g_1,1}\cdots x_{g_n,n}$ (with
coefficient 1). Since the grading is nondegenerate, ${f}_g$ is not
a monomial and therefore has another monomial with nonzero
coefficient corresponding to some permutation $\sigma\neq id$,
hence $g_1\cdots g_n=g_{\sigma(1)}\cdots g_{\sigma(n)}$. This can
be done for any tuple of length $n$, so it follows that $G$ is
$n$-permutable.
\end{proof}

The main theorem now follows from the following proposition.

\begin{proposition} \label{pro:to_arbitrary_groups}
Let $G$ be any group and $d$ be a positive integer. Suppose that
any finitely generated subgroup $H$ of $G$ contains an abelian
subgroup $U_{H}$ with $[H:U_{H}] \leq d$. Then there exists an
abelian subgroup $U$ of $G$ with $[G:U] \leq d$.
\end{proposition}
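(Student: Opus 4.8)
This is a local-to-global statement, and the plan is to assemble a global abelian subgroup of index $\le d$ out of the local ones supplied by the hypothesis, via a compactness argument over the directed family of finitely generated subgroups of $G$. For each finitely generated $H \le G$ let $\mathcal{A}_H$ denote the set of abelian subgroups of $H$ of index at most $d$. The hypothesis gives $\mathcal{A}_H \neq \emptyset$, and since $H$ is finitely generated, M. Hall's theorem (\cite{Ha}, the same result invoked above) shows that $H$ has only finitely many subgroups of each finite index, so $\mathcal{A}_H$ is a finite nonempty set. Whenever $H \subseteq H'$, the assignment $U' \mapsto U' \cap H$ maps $\mathcal{A}_{H'}$ into $\mathcal{A}_H$: the intersection $U' \cap H$ is again abelian, and $[H : U' \cap H] \le [H' : U'] \le d$ by multiplicativity of the index. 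These maps are compatible with further inclusions, so $\{\mathcal{A}_H\}$ is an inverse system of finite nonempty sets indexed by the directed poset of finitely generated subgroups of $G$.

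Next I would invoke the standard fact that the inverse limit of nonempty finite sets over a directed index set is nonempty; concretely one realizes the limit as the intersection of the (clopen) compatibility conditions inside the compact product $\prod_H \mathcal{A}_H$ and verifies the finite intersection property using directedness. A point of the limit is a coherent family $(U_H)_H$ satisfying $U_{H'} \cap H = U_H$ whenever $H \subseteq H'$, and I then set $U = \bigcup_H U_H$.

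Finally I would check that $U$ has the required properties. Coherence makes $\{U_H\}$ a directed family of subgroups, so $U$ is a subgroup; moreover any two elements of $U$ lie in a common $U_H$ (take $H$ generated by the two witnessing subgroups), which is abelian, so $U$ is abelian. The step I expect to carry the actual content, rather than the routine assembly, is the global index bound $[G:U] \le d$: given any $d+1$ elements $g_0, \dots, g_d \in G$, put $H = \langle g_0, \dots, g_d \rangle$; since $[H : U_H] \le d$, two of the $g_i$ share a left $U_H$-coset, i.e.\ $g_i^{-1} g_j \in U_H \subseteq U$, so any $d+1$ elements of $G$ meet at most $d$ left $U$-cosets, whence $[G:U] \le d$.

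I would also remark that the whole argument can instead be phrased directly in the compact space $\{0,1\}^G$ of subsets of $G$, where the property ``$S$ is an abelian subgroup of index $\le d$'' is a conjunction of closed conditions and every finite subfamily of them is satisfied by a suitable $U_H$. I expect the only delicate point there, exactly as above, to be the encoding of ``index $\le d$'' as ``every $(d+1)$-tuple has two entries in a common left coset,'' so that the pigeonhole principle over $\langle g_0, \dots, g_d \rangle$ furnishes the finite intersection property.
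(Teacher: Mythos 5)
Your proof is correct, but it takes a genuinely different route from the paper's. The paper works with ``pairs'' $(F,A)$ ($F$ finitely generated, $A\leq F$ abelian of index $\leq d$) and ``good pairs'' (those extendable compatibly over every larger finitely generated subgroup); after showing good pairs propagate upward, it avoids ever having to fix a globally coherent system of choices. Instead it maximizes first $[F:A]$ and then $[G:C_G(A)]$ over good pairs, and defines the global subgroup $J$ as the subgroup \emph{generated by all} abelian parts of good pairs above a fixed maximal one, proving $J$ abelian via a centralizer argument and $[G:J]\leq d$ by the same pigeonhole you use. You bypass all of that by making the coherent system of choices directly: $\mathcal{A}_H$ is finite (Hall) and nonempty, $U'\mapsto U'\cap H$ gives a well-defined inverse system (the index inequality $[H:U'\cap H]\leq [H':U']$ is the standard injectivity of $H/(U'\cap H)\hookrightarrow H'/U'$ rather than ``multiplicativity,'' but the fact is right), and the nonemptiness of an inverse limit of finite nonempty sets over a directed poset hands you the family $(U_H)$; the union is then abelian by directedness and has index $\leq d$ by the pigeonhole step, which is indeed where the content lies. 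What each approach buys: yours is shorter and more conceptual, outsourcing the combinatorics to a standard compactness principle (Tychonoff, or K\"onig--Zorn); the paper's is self-contained and more explicit, replacing the existence of a point in the inverse limit by the observation that all ``maximal'' local abelian pieces automatically centralize one another, so no coherent selection is needed. Both correctly preserve the constant $d$.
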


\begin{remark}
The proposition above generalizes a statement which appears in
\cite{IsPass} but the proof is basically the same (see Lemma 3.5
and the proof of Theorem II). We believe the result of the
proposition is well known but we were unable to find an
appropriate reference in the literature.

\end{remark}

\begin{proof}

Let $A\leq F\leq G$. We say that $(F,A)$ is a pair if $F$ is f.g., $A$ is abelian and $[F:A]\leq d$. We write $(F,A) \leq (F_1,A_1)$ if $F \cap A_1 = A$.
Note in particular that $ [F:A] \leq [F_1:A_1]$.\\
A pair $(F,A)$ is called good if whenever $F\leq F_1 \leq G$ with $F_1$ finitely generated, there is a pair $(F_1,A_1)$ with $(F,A) \leq (F_1,A_1)$.
Note that the assumption of the proposition says that $({e},{e})$ is a good pair.\\
\begin{enumerate}

\item

We claim that if $(F,A)$ is good pair and $F\leq H \leq G$ with $H$ finitely generated, we can find $B\leq H$ such that $(H,B)$ is a good pair and $(F,A)\leq (H,B)$.\\
Indeed, since $(F,A)$ is a good pair, there are pairs $(H,B_i)$ with $(F,A)\leq (H,B_i)$, and by Hall's theorem there exist only finitely many such pairs.
Suppose by negation that none of them are good pairs.
Thus we can find $H\leq F_i \leq G$ ($F_i$-f.g.) such that there are
no abelian subgroups $A_i$ with $(H,B_i)\leq (F_i,A_i)$. The group $K=\left\langle F_1, ..., F_n \right\rangle$ is f.g. so there is
some abelian subgroup $A_K\leq K$ of index $\leq d$ such that $(F,A)\leq (K,A_K)$. Clearly, there is some $i$ such that $A_K\cap H=B_i$, but then $(H,B_i)\leq (F_i,F_i\cap A_K)$ - contradiction.

\item

Let $(F,A)$ be a good pair with $s=[F:A]$ maximal. Note that if
$(F,A)\leq (H,B)$ are good pairs, then we must have $[F:A]=[H:B]$.
Claim: for any such $B$ we have $[G:C_G(B)] \leq d$. Let us show
that if $g_1,\ldots,g_s$ represent the left cosets of $A$ in $F$
then they also represent the left cosets of $C_G(B)$ in $G$. Fix
an element $g\in G$. Then, by (1) above, $\langle H,g \rangle$ has
an abelian subgroup $C$ such that $(H,B)\leq (\langle H,g
\rangle,C)$ are good pairs. It follows that $g_1,\ldots,g_s$
represent also the left cosets of $C$ in $\langle H, g \rangle$
and hence $g \in g_i C$ for some $i$. Since $B\leq C$ are abelian
groups we get that $g_i C \subseteq g_i C_G(B)$ and the claim
follows.

\item

Assume now that $(F,A)$ is a good pair with $[F:A]=s$ and $[G:C_G(A)]$ maximal. Define
$$J= \langle B \mid (H,B)\geq (F,A) \mbox{ is a good pair} \rangle. $$ We claim that $J$ is abelian and $[G:J]\leq d$.\\
Let $(H_i,B_i)\geq (F,A)$, $i=1,2$, be good pairs, and let $b_i
\in B_i$. Since $A\leq B_1$, we have that $C_G(B_1)\leq C_G(A)$,
but from the maximality of $[G:C_G(A)]$, it follows that there is
an equality. Similarly, we have that $C_G(B_2) = C_G(A)$ and since
$B_2$ is abelian we get that $b_2\in B_2\subseteq
C_G(B_2)=C_G(B_1)$, so that $b_1,b_2$ commute.
This proves $J$ is abelian.\\
Suppose now that $[G:J]>d$, and let $g_0,\ldots,g_d$ different
coset representatives of $J$ in $G$. The group $F_1 = \langle F,
g_1,\ldots,g_d \rangle$ is finitely generated and so we can find
$A_1 \leq F_1$ such that $(F_1,A_1)$ is a good pair larger than
$(F,A)$, and in particular $[F_1:A_1]\leq d$. But this means that
there are some $0\leq i<j\leq d$ with $g_i ^{-1} g_j \in A_1
\subseteq J$ which is a contradiction. Thus, $[G:J] \leq d$ and we
are done.

\end{enumerate}
\end{proof}

As mentioned in the introduction, once a group has an abelian subgroup of finite index (say $d$), then it
also has a characteristic abelian subgroup of (finite) index bounded by a function of $d$. For completeness of the article we provide a simple proof here (shown to us by Uri Bader).

\begin{lemma} \label{characteristic subgroup}
There is a function $f:\mathbb{N}\to\mathbb{N}$ such that if a group
$G$ contains an abelian subgroup $A$ of index at most $n$, then
$G$ contains a characteristic abelian subgroup of index $\leq f(n)$.\end{lemma}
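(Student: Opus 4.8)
The plan is to produce a \emph{canonical} (automorphism-invariant) abelian subgroup of bounded index, so that being characteristic comes for free, and then to reduce the index bound to a finite centralizer computation. First I would pass to a normal abelian subgroup: let $N=\bigcap_{g\in G}gAg^{-1}$ be the core of $A$. The action of $G$ on the coset space $G/A$ gives a homomorphism $G\to\Sym(G/A)$ whose kernel is exactly $N$, so $N\trianglelefteq G$, $N\le A$ is abelian, and $[G:N]\le n!=:m$.

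To obtain something characteristic I would not work with $N$ itself (it is only normal) but with the family $\mathcal N$ of \emph{all} normal abelian subgroups of $G$ of index at most $m$. This family is nonempty, since $N\in\mathcal N$, and it is preserved by every automorphism of $G$ because ``normal'', ``abelian'' and ``index $\le m$'' are all automorphism-invariant. Hence $L:=\langle\mathcal N\rangle$ is a characteristic subgroup of $G$, and $N\le L$ gives $[G:L]\le[G:N]\le m$. I then set $U:=Z(L)$, which is abelian and is characteristic in $G$ because the center of a characteristic subgroup is characteristic.

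It remains to bound $[G:U]=[G:L]\,[L:Z(L)]$, and the step I expect to be the main obstacle is bounding $[L:Z(L)]$ by a function of $m$. Here the point is that, although $\mathcal N$ may be infinite, $L$ is already generated by boundedly many of its members. I would build a chain $N=L_1\subsetneq L_2\subsetneq\cdots$ by setting $L_{j+1}=L_jM$ for any $M\in\mathcal N$ with $M\not\subseteq L_j$; such an $M$ exists while $L_j\ne L$ because $L=\langle\mathcal N\rangle$, and $L_jM$ is a subgroup since $M\trianglelefteq G$. Each step at least halves the index, and $[L:L_1]=[L:N]\le m$, so the chain reaches $L$ after $s\le\log_2 m+1$ steps; thus $L=\langle M_1,\dots,M_s\rangle$ with $M_i\in\mathcal N$. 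Now an element of $L$ is central if and only if it commutes with each generating subgroup, so $Z(L)=\bigcap_{i=1}^{s}C_L(M_i)$. Since each $M_i$ is abelian we have $M_i\le C_L(M_i)$ and hence $[L:C_L(M_i)]\le[L:M_i]\le m$, whence $[L:Z(L)]\le\prod_{i=1}^{s}[L:C_L(M_i)]\le m^{s}\le m^{\log_2 m+1}$. Combining, $[G:U]\le m^{\log_2 m+2}$ with $m=n!$, which defines the required function $f$.

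I expect the only delicate points to be the verification that $L$ is genuinely characteristic (routine, from invariance of $\mathcal N$) and the counting step that replaces the possibly infinite generating family $\mathcal N$ by a bounded subfamily; everything else is the standard inequality $[L:\bigcap_i H_i]\le\prod_i[L:H_i]$ together with the identity $C_L(\langle M_1,\dots,M_s\rangle)=\bigcap_{i}C_L(M_i)$. I would emphasize that this route avoids any appeal to Fitting's or Schur's theorem and yields an explicit $f$.
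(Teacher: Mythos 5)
Your proof is correct, and it uses essentially the same strategy as the paper's: generate a characteristic subgroup from boundedly many abelian subgroups of bounded index and take its center, whose index is controlled because the center contains the intersection of the centralizers of those generators. The paper's version is shorter and avoids your preliminary reduction to the core and to normal subgroups: it takes $N=\langle\varphi(A):\varphi\in\Aut(G)\rangle$ directly, observes that $n$ automorphic images of $A$ already generate $N$ (by the same index-halving argument you use), and notes that $Z(N)$ contains their intersection, which yields the cleaner bound $f(n)=n^{n}$ in place of your $(n!)^{\log_2(n!)+2}$.
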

\begin{proof}
Let $N$ be the characteristic subgroup of $G$ generated by $A$ (the group generated by all images of $A$ under all automorphisms of $G$). Let $Z=Z(N)$ (the center of $N$). We claim $[N:Z]$ (and hence $[G:Z]$) is bounded by a function of $n$. Indeed, there are $n$ images of $A$ which already generate $N$ and $Z$ contains their intersection. This proves the lemma.

\end{proof}

In the preceding section, we proved that for a finite group and a nondegenerate
$G$-graded algebra $A$, we have $\exp(FG)\leq \exp(A)^2$ and $d(FG)\leq 2 (d(A)-1)^2$ .
The rest of this section is dedicated to generalize these results for infinite groups.

\begin{lemma}

Let $G$ be a finitely generated group such that
$FG$ is PI. Then there exists a finite index normal
subgroup $N$ of $G$ such that $Id(FG)=Id(FG/N)$.
In particular $Id(FG)=Id(M_k(F))$ for some integer $k$.

\end{lemma}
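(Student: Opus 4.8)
The plan is to realise $FG$ as being ``assembled'' from its finite quotients and to show that, because $FG$ is PI, only finitely much of that assembly is needed. First I would record two structural inputs. Since $FG$ is PI, a theorem of Isaacs and Passman (see \cite{Pass}) gives that $G$ has an abelian subgroup of finite index; being moreover finitely generated, $G$ is then residually finite (a finitely generated abelian-by-finite group is polycyclic-by-finite, hence residually finite). Second, since in characteristic zero the $T$-ideal of identities is generated by its multilinear elements, and a multilinear polynomial is an identity of an algebra if and only if it is an identity after any field extension, I may extend scalars and assume $F$ is algebraically closed; it then suffices to prove the asserted equalities of $T$-ideals over $\bar F$, as they descend to $F$.

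The first main step is to show that the identities of $FG$ are detected by its finite quotients, namely
$$\Id(FG)=\bigcap_{N} \Id(F[G/N]),$$
the intersection ranging over all finite-index normal subgroups $N\trianglelefteq G$. The inclusion $\subseteq$ is automatic because each $F[G/N]$ is a homomorphic image of $FG$. For $\supseteq$, given a multilinear polynomial $p\notin\Id(FG)$, I fix an admissible evaluation producing a nonzero element $w\in FG$; since $p$ has finitely many variables and each substituted element is a finite $F$-combination of group elements, $w$ has finite support. By residual finiteness I can choose $N$ small enough that the (finitely many) support elements of $w$ remain pairwise distinct modulo $N$. As the quotient map $FG\to F[G/N]$ is an algebra homomorphism, the value of $p$ on the images of the substituted elements equals the image of $w$, which is then nonzero; hence $p\notin\Id(F[G/N])$.

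Next I would invoke the finite-group picture together with the PI hypothesis. For each such $N$ the group $G/N$ is finite, so $F[G/N]$ is semisimple and, over $\bar F$, a product of matrix algebras; consequently $\Id(F[G/N])=\Id(M_{k_N}(\bar F))$, where $k_N=b(G/N)$ is the largest irreducible character degree of $G/N$. The crucial use of PI enters here: if $FG$ satisfies a multilinear identity of degree $d$, then so does every quotient $M_{k_N}(\bar F)$, while by the Amitsur--Levitzki theorem $M_{k_N}(\bar F)$ admits no nonzero identity of degree below $2k_N$; therefore $2k_N\leq d$, and the integers $k_N$ are uniformly bounded. Let $K=\max_N k_N$, a maximum attained at some $N_0$.

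Finally I would assemble the pieces. Since the matrix identities are nested, $\Id(M_k(\bar F))\supseteq\Id(M_{k+1}(\bar F))$, and since $K$ is attained at $N_0$, the intersection collapses to $\bigcap_N\Id(M_{k_N}(\bar F))=\Id(M_K(\bar F))$. Combining this with the detection formula gives $\Id(\bar F G)=\Id(M_K(\bar F))=\Id(F[G/N_0])$, and descending to $F$ yields $\Id(FG)=\Id(F[G/N_0])=\Id(M_K(F))$; taking $N=N_0$ and $k=K$ proves the lemma. The step I expect to require the most care is the detection formula $\Id(FG)=\bigcap_N\Id(F[G/N])$: it is the one place where residual finiteness of the finitely generated group $G$ is genuinely used, and one must verify that a single, well-chosen finite quotient preserves the given nonzero evaluation.
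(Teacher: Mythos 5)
Your proof is correct and follows essentially the same route as the paper's: use residual finiteness of the finitely generated abelian-by-finite group $G$ to show $\Id(FG)=\bigcap_N \Id(F[G/N])$ by separating the support of a nonzero evaluation, identify each $\Id(F[G/N])$ with $\Id(M_{k_N}(F))$, and collapse the nested intersection once the $k_N$ are uniformly bounded. The only cosmetic difference is that you bound the $k_N$ via Amitsur--Levitzki and the PI degree, whereas the paper bounds them via $k_N^2=\exp(F[G/N])\leq\exp(FG)$.
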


\begin{proof}
We know that $G$ is abelian by finite. Furthermore, since it is finitely generated it is residually finite. If $N$ is any finite index normal subgroup of $G$, then $Id(FG)\subseteq Id(FG/N)$ and
hence $I:=\bigcap_{[G:N]<\infty} Id(FG/N)\supseteq Id(FG)$. On the other
hand, the algebra $FG/N$ is semisimple, so that $Id(FG/N)=Id(M_k(F))$
where $k^2=\exp(FG/N)\leq \exp(FG)$ and in particular the set $\{\exp(FG/N)\}_{N}$ is bounded. It follows that $I=Id(M_k(F))$ for some $k$
and there is some finite index normal subgroup $N$ with $I=Id(FG/N)$.
The lemma will follow if we can show that $Id(FG)=I$.

Let $f(x_1,\ldots,x_m)\in I$ be any multilinear polynomial. If $f$
is not an identity of $FG$, we can find some $g_1,\ldots,g_m\in G$
such that $f(U_{g_1},\ldots,U_{g_m})\neq 0$ ($U_{g}$ represents
$g$ in $FG$). Let $h_1,\ldots,h_k\in G$ and $a_1,\ldots,a_k\in
F^\times$ such that $f(U_{g_1},\ldots,U_{g_m})=\sum a_i U_{h_i}$.
Since $G$ is residually finite, there is some finite index normal
subgroup $N$ not containing $h_i^{-1}h_j$ for any $i\neq j$.
Reducing the equation above modulo $N$, the elements $U_{h_i}$
remain linearly independent by the choice of $N$, so in particular
$f$ is not an identity of $FG/N$. We obtain that $I \subseteq \Id(FG)$ and the result follows.
\end{proof}

\begin{lemma}
Let $G$ be any group such that $FG$ is PI. Then there exists
some finitely generated subgroup $H$ of $G$ such that
$Id(FG)=Id(FH)$. Consequently, $Id(FG)=Id(M_k(F))$ for
some integer $k$.
\end{lemma}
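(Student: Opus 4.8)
The plan is to exhibit $H$ as a single, well-chosen finitely generated subgroup and then quote the previous lemma. The starting observation is that for every finitely generated subgroup $H \leq G$ we automatically have $FH \subseteq FG$, hence $\Id(FG) \subseteq \Id(FH)$; so the real content is to reverse this inclusion for a suitable $H$. The first step I would record is the ``local witness'' principle that $\Id(FG) = \bigcap_{H} \Id(FH)$, the intersection taken over all finitely generated subgroups $H$ of $G$. The inclusion $\subseteq$ is the trivial one just noted; for $\supseteq$, if $f \notin \Id(FG)$ then $f(w_1,\ldots,w_m) \neq 0$ for some $w_i \in FG$, and since each $w_i$ is supported on finitely many group elements, the subgroup $H$ generated by all of those elements is finitely generated, the entire evaluation takes place inside $FH$, and therefore $f \notin \Id(FH)$.

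Next I would invoke the previous lemma to pin down the shape of each term of this intersection. Each $FH$ with $H$ finitely generated is a subalgebra of the PI algebra $FG$, hence itself PI, so the previous lemma supplies an integer $k_H$ with $\Id(FH) = \Id(M_{k_H}(F))$. The payoff of this identification is that the possible $T$-ideals $\Id(M_k(F))$ form a chain: $\Id(M_a(F)) \subseteq \Id(M_b(F))$ if and only if $a \geq b$. Moreover $FH \subseteq FG$ forces $c_n(FH) \leq c_n(FG)$ for all $n$, and since the exponent depends only on the $T$-ideal of identities we get $k_H^2 = \exp(FH) \leq \exp(FG) < \infty$; thus the integers $k_H$ are uniformly bounded.

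I would then choose $H_0$ finitely generated with $k_{H_0} =: k$ maximal, which is possible because the $k_H$ are bounded positive integers. For any finitely generated $H \supseteq H_0$ we have $\Id(FH) \subseteq \Id(FH_0)$, i.e. $\Id(M_{k_H}(F)) \subseteq \Id(M_{k}(F))$, forcing $k_H \geq k$; maximality then gives $k_H = k$ and hence $\Id(FH) = \Id(FH_0)$. Finally, for an \emph{arbitrary} finitely generated $H$ the subgroup $\langle H, H_0\rangle$ is finitely generated and contains $H_0$, so $\Id(FH_0) = \Id(F\langle H,H_0\rangle) \subseteq \Id(FH)$; intersecting over all such $H$ and applying the local witness principle yields $\Id(FH_0) \subseteq \Id(FG)$, which together with the automatic reverse inclusion gives $\Id(FG) = \Id(FH_0)$. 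The ``consequently'' clause is then immediate: applying the previous lemma to the finitely generated group $H_0$ gives $\Id(FG) = \Id(FH_0) = \Id(M_k(F))$.

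The step I expect to carry the real weight is the uniform bound on $k_H$: it is exactly here that finiteness of $\exp(FG)$ (equivalently the PI hypothesis together with the integrality of the exponent) is used, and it is what forces the directed system $\{\Id(FH)\}$ to stabilise rather than to descend forever. Everything else is bookkeeping once the previous lemma has reduced each finitely generated piece to a matrix identity ideal and thereby made the relevant $T$-ideals totally ordered by inclusion.
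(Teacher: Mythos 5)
Your proof is correct and takes essentially the same route as the paper: both establish $\Id(FG)=\bigcap_{H}\Id(FH)$ over finitely generated subgroups $H$, identify each $\Id(FH)$ with $\Id(M_{k_H}(F))$ via the preceding lemma, and use the uniform bound $k_H^{2}=\exp(FH)\le\exp(FG)$. You merely spell out explicitly the closing step (choosing $H_0$ with $k_{H_0}$ maximal and exploiting the directedness of the family of finitely generated subgroups together with the chain structure of the ideals $\Id(M_k(F))$) that the paper compresses into ``and the lemma follows.''
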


\begin{proof}
By the preceding lemma, for each finitely generated subgroup $H$
of $G$ we have $Id(FH)=Id(M_k(F))$ for some integer $k$ which is
uniformly bounded over the finitely generated subgroups (by
$\exp(FG)^{1/2})$. Since any multilinear nonidentity of $FG$ is
already a nonidentity of $FH$ for some finitely generated subgroup
$H$ of $G$, we have that $Id(FG)=\bigcap_{H\leq G} Id(FH)$, $H$ is f.g.,
and the lemma follows.

\end{proof}

We can now generalize to arbitrary groups the result of the
previous section.

\begin{theorem}

Let $G$ be any group and $A$ be a nondegenerate $G$-graded algebra.
Then $\exp(FG)\leq \exp(A)^2$ and $d(FG)\leq 2 (d(A)-1)^2$.

\end{theorem}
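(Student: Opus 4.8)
The plan is to reduce the statement to the finite-group case already settled in Corollary \ref{bounding the exponent and PI degree finite groups}, using the two preceding lemmas to transport both $\exp(FG)$ and $d(FG)$ down to a finite quotient of a finitely generated subgroup. First I would observe that $FG$ is in fact PI: since $G$ grades $A$ nondegenerately, the main theorem (already proven in this section for arbitrary $G$) shows that $G$ is abelian-by-finite, and hence by \cite{Kap} the group algebra $FG$ satisfies a polynomial identity. This is precisely what licenses the application of the two preceding lemmas, each of which assumes $FG$ to be PI.

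Next, by the last lemma there is a finitely generated subgroup $H\leq G$ with $\Id(FG)=\Id(FH)$; since the exponent and the PI degree depend only on the $T$-ideal, this gives $\exp(FG)=\exp(FH)$ and $d(FG)=d(FH)$. To feed $H$ into the finite-group machinery I would pass to the $H$-graded subalgebra $A_H:=\bigoplus_{h\in H}A_h$. It is closed under multiplication because $A_{h_1}A_{h_2}\subseteq A_{h_1h_2}$, and its $H$-grading is nondegenerate, being the restriction of the nondegenerate $G$-grading on $A$. Moreover $A_H$ is a subalgebra of the PI-algebra $A$, so $\Id(A)\subseteq \Id(A_H)$ and therefore $\exp(A_H)\leq\exp(A)$ and $d(A_H)\leq d(A)$.

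Now $H$ is finitely generated and, being a subgroup of an abelian-by-finite group, is residually finite. By the penultimate lemma applied to $H$ there is a finite-index normal subgroup $N\trianglelefteq H$ with $\Id(FH)=\Id(F[H/N])$, so $\exp(FH)=\exp(F[H/N])$ and $d(FH)=d(F[H/N])$, while $H/N$ is finite. I would then induce an $(H/N)$-grading on $A_H$ by setting $(A_H)_{hN}=\bigoplus_{n\in N}(A_H)_{hn}$; exactly as in the residually-finite proposition above, this induced grading is again nondegenerate. Applying the finite-group Corollary \ref{bounding the exponent and PI degree finite groups} to the nondegenerate $(H/N)$-graded algebra $A_H$ then yields $\exp(F[H/N])\leq\exp(A_H)^2$ and $d(F[H/N])\leq 2(d(A_H)-1)^2$.

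Chaining the equalities and inequalities gives
$$\exp(FG)=\exp(FH)=\exp(F[H/N])\leq \exp(A_H)^2\leq \exp(A)^2,$$
$$d(FG)=d(FH)=d(F[H/N])\leq 2(d(A_H)-1)^2\leq 2(d(A)-1)^2,$$
which is the claim. The main obstacle here is bookkeeping rather than a new idea: one must check that nondegeneracy survives both the restriction to $A_H$ and the induction of the grading modulo $N$, and that the $T$-ideal equalities genuinely carry the numerical invariants across the chain $G\leftarrow H\to H/N$. The cleanest way to organize the argument is to note that every reduction step either restricts to a graded subalgebra (which enlarges the identities, hence can only decrease $\exp$ and $d$, and these quantities are monotone via $t\mapsto 2(t-1)^2$ increasing for $t\geq 1$) or replaces a group by one with the same $T$-ideal of group-algebra identities (which preserves $\exp$ and $d$ exactly).
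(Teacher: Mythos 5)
Your proposal is correct and follows essentially the same route as the paper: establish that $FG$ is PI via abelian-by-finiteness, use the two preceding lemmas to replace $G$ by a finite quotient $H/N$ of a finitely generated subgroup with the same $T$-ideal, restrict to the nondegenerately graded subalgebra $A_H$ with the induced $H/N$-grading, and invoke Corollary \ref{bounding the exponent and PI degree finite groups}. The only difference is that you spell out the bookkeeping (residual finiteness of $H$, preservation of nondegeneracy under restriction and induction) slightly more explicitly than the paper does.
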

\begin{proof}
Since $G$ grades nondegenerately the algebra $A$, it is abelian by
finite and therefore the group algebra $FG$ is PI. It follows from
the previous two lemmas that there is some finitely generated
subgroup $H$ and a finite index normal subgroup $N$ in $H$ such
that $Id(FG)=Id(FH/N)$, hence it is enough to bound the exponent
and PI-degree of $FH/N$.

If $A_H$ is the subalgebra of $A$ supported on the $H$ homogeneous
components of $A$, we have $\Id(A_{H}) \supseteq \Id(A)$,
$\exp(A_{H})\leq \exp(A)$ and $d(A_{H}) \leq d(A)$. Moreover,
since $A$ is nondegenerately $G$-graded, $A_{H}$ is
nondegenerately $H$-graded and hence $A_{H}$ is also
$H/N$-nondegenerately graded where $N$ is any normal subgroup of
$H$ (by the induced grading).

By Corollary \ref{bounding the exponent and PI degree finite groups} we have
$$\exp(FH/N)\leq \exp(A_H)^2 \leq \exp(A)^2$$

and

$$d(FH/N) \leq 2(d(A_{H})-1)^{2} \leq 2(d(A_{H})-1)^{2}$$

and the result follows.

\end{proof}

\end{section}
\begin{section} {\label{sec:examples} Some examples}

Let $G$ be a finitely generated group and suppose it grades
nondegenerately a PI algebra $A$. We know that $G$ is $n$
permutable for some $n \in \mathbb{N}$. While $G$ must be abelian
by finite, the minimal index of an abelian subgroup is not bounded
by a function of the permutablity index. Indeed, if there was such
a function $f(n)$, then given an arbitrary $n$-permutable group
$H$, its finitely generated subgroup would be $n$-permutable as
well. By the assumption, each such subgroup has an abelian
subgroup of index $\leq f(n)$, and hence, by Proposition
\ref{pro:to_arbitrary_groups}, the group $G$ would contain an abelian subgroup of index bounded by $f(n)$. This is known to be false. In fact $G$ need not have an abelian subgroup of finite index (see \cite{CLMR}).\\
Let us give a concrete example, i.e. a family of (finite) $n$-permutable groups $\{G_{k}\}_{k\in \mathbb{N}}$, with $d_{k}=min \{[G_{k}:U_{k}] \mid U_{k} $ abelian subgroup$\}$ and $\lim d_{k}= \infty$.

\begin{example}

Let $G=C_p^{2n}$ for some $n$ and let $\alpha \in
Z^2(G,\mathbb{C}^{*})$ be a \textit{nontrivial} two cocycle. It is
well known that up to a coboundary $\alpha$ takes values which are
roots of unity, and for $G$ above, the values must be $p$-roots of
unity. Thus, we may consider $\alpha$ as a cocycle in $Z^2(G,C_p)$
which corresponds to a central extension
$$1\to C_{p}\to  H\to  G\to 1.$$

Since the group $G$ is abelian and the cocycle $\alpha$ is nontrivial we have that $[H,H]=Z(H)\cong C_{p}$ and hence the group $H$ is $p+1$ permutable (see \cite{CLMR}, (3.3)).\\
Let $B=\mathbb{C}^\alpha G$ be the corresponding twisted group
algebra with basis $\{U_g\}_{g\in G}$. If $A\leq H$ is an abelian
group of minimal index, we have that $[H,H]\leq A$, and thus
we have $\tilde{A}=A/[H,H] \leq G$.
Clearly, the group $A$ is abelian if and only if $[U_{g_1},U_{g_2}]=1$ (the multiplicative commutator) for any $g_1,g_2 \in \tilde{A}$.\\
For $g,h\in G$, set $\mu(g,h)=\frac{\alpha(g,h)}{\alpha(h,g)}$,
namely the scalar satisfying $U_g U_h = \mu(g,h) U_h U_g$. It is
easily seen that $\mu:G\times G\to \mathbb{C}^{*}$ is a
bicharacter, i.e. $\mu(g_1 g_2,h)=\mu(g_1,h)\mu(g_2,h)$ and
$\mu(g,h_1 h_2)=\mu(g,h_1)\mu(g,h_2)$.
With this notation we have that $\mu(g_1,g_2)=1$ for any $g_1,g_2 \in \tilde{A}$.\\
Identifying $C_p$ with the additive group of the field $F_p$ with $p$ elements, we see that $\mu$ is a bilinear map. In particular, if $g \in G$, then $dim\{h \in G\mid \mu(h,g)=1\}\geq n-1$.
If  $dim_{F_p}(\tilde{A})>\frac{1}{2} \dim_{F_p} (G)$, or equivalently $[G:\tilde{A}]<p^n$, then there is some $e\neq u\in \tilde{A}$ such that $\mu(u,g)=1$ for all $g\in G$ (by dimension counting).
Thus, if $\mu$ is nondegenerate, i.e. for any $e\neq h\in G$ there is some $g\in G$ such that $\mu(h,g)\neq 1$, then $[H:A]=[G:\tilde{A}]\geq p^n$.\\
Note that to say that $\mu$ is nondegenerate is equivalent to
saying that $U_g$ is in the center of the twisted group algebra if
and only if $g=e$, which in turn is equivalent to the
twisted group algebras $\mathbb{C}^{\alpha}C_p ^{2n}$ being
isomorphic to a matrix algebra $M_{p^{n}}(\mathbb{C})$.

Fix a prime $p$ and let $\sigma, \tau$ be generators for $C_p
\times C_p$. Let $B$ be the twisted group algebra
$B=\bigoplus_{0\leq i,j \leq p-1} \mathbb{C}U_{\sigma^i \tau^j}$
where the multiplication is defined by
$$ U_{\sigma^i \tau^j} = {U_\sigma}^i{U_\tau}^j, \; U_\sigma U_\tau = \zeta U_\tau U_\sigma $$
and $\zeta$ is a primitive $p$-root of unity. It is well known
that $B\cong M_p(\mathbb{C})$, and hence $\bigotimes_1^n B$ is on
one hand isomorphic to a twisted group algebra with the group
$C_p^{2n}$ and on the other hand isomorphic to
$M_{p^{n}}(\mathbb{C})$. This completes the construction of the required family of groups. We remark here that the
function $\mu$ defined above plays a central role in the theory of
twisted group algebras and their polynomial identities (see
\cite{Al-David}).
\end{example}

\begin{remark}
Let $\alpha_n \in Z^2(C_p^{2n},\left\langle\zeta \right\rangle)$
be the nondegenerate 2-cocycle as constructed in the previous
example, and let $H_n$ be the central extensions defined by such
cocycle. The last example shows that the group algebra
$\mathbb{C}H_n$ has an irreducible representation of degree $p^n$.
On the other hand, Kaplansky's theorem \cite{Kap} states that if a
group has an abelian subgroup of index $m$, then all of its
irreducible representations are finite with degree at most $m$.
This provides another proof that the minimal index of an abelian
subgroup of $H_n$ tends to infinity.
\end{remark}

Next we provide some examples/counter examples to
statements that are related to the main theorem.

\begin{example}
Let F be an algebraically closed field of characteristic zero. For any finite abelian group
$G$, the group algebra $FG$ is isomorphic to a product of $|G|$ copies of $F$.
In particular, we get that $\exp(FG)=1$. Hence, we cannot hope to get an inequality
of the form $|G| \leq \exp(A)^K $ for any constant $K$. \\
More generally, given an $H$-graded algebra $A$ with a nondegenerate grading,
the algebra $B=FG\otimes A$ has a natural $G\times H$ grading which is also
nondegenerate. In addition we have that $\exp(B)=\exp(A)$. While the grading group
is of course larger, the index of the largest abelian group remains the same.

\end{example}

\begin{example}

Suppose we omit the requirement that $\Id_{G}(A)$ has no $G$-graded
monomials and only assume that $\Id_{G}(A)$ has $G$-graded monomials
of high degrees (as a function of $\dim(A)$ or the cardinality of
$G$). In other words we drop the assumption that $A$ is
nondegenerately $G$-graded and we only assume that the $G$-grading
on $A$ is nondegenerately bounded. We show that the
consequence of the main theorem does not hold in general.\\
Consider the algebras $A_{m}$ of upper triangular matrices
$m\times m$ where the diagonal matrices consist only of scalar
matrices. Note that by Giambruno and Zaicev's theorem (see \cite{GZ_affine}) we have $\exp(A)=1$. Let $G$ be a group
of order $n$ and assume that $m=n^{2}+1$. Let
$s'=(g_{1},\ldots,g_{n})\in G^{n}$ be a tuple such that each
element of $G$ appears in $s'$ exactly once and let $s\in
G^{\left(n^{2}+1\right)}$ be $n$ copies of $s'$ with additional
$g_{1}$ at the end. Consider the algebra $A_{m}$ with the
elementary grading corresponding to the tuple $s$. We claim that
$A_{m}$ has no graded multilinear monomial identities of degree
$\leq n$.\\
Fix $1\leq i\leq n^{2}+1-n$ and $h\in G$. We first note that by
the definition of the grading we have that $e_{i,j}$ is
homogeneous of degree $s_{i}^{-1}s_{j}$ for each $1\leq i<j\leq
n^{2}+1$. By the choice of the tuple $s$, the
elements $\left\{
s_{i}^{-1}s_{i+1},s_{i}^{-1}s_{i+2},\ldots,s_{i}^{-1}s_{i+n}\right\}
$ are all distinct, and therefore, for any $h \in G$ and $i\leq
n^{2}+1-n$ we can choose $j=j(i,h)$ such that $i<j\leq i+n$ and
$e_{i,j}\in A_{h}$.\\
Let $x_{h_{1},1}\cdots x_{h_{n},n}$ be any multilinear monomial, $h_{1},\ldots,h_{n}\in G$.\\
Set $i_{1}=1$. Given $i_{k}$, define $i_{k+1}$ to be
$j(i_{k},h_{k})$ so that $e_{i_{k},i_{k+1}}$ is homogeneous of
degree $h_{k}$ and $i_{k}<i_{k+1}\leq i_{k}+n$. It is now easy to
see by induction that $i_{k}\leq1+(k-1)n\leq1+n^{2}$ for all
$1\leq k\leq n$ so that $e_{i_{1},i_{2}}\cdots e_{i_{n},i_{n+1}}$
is well defined as an element of $A$ and it is a nonzero
evaluation
of $x_{h_{1},1}\cdots x_{h_{n},n}$.\\
 For a finite group $G$, denote by $\gamma(G)$ the smallest index
of an abelian subgroup in $G$. Let $G_{n}$ be any sequence of
groups where $\gamma(G_{n})$ goes to infinity with $n$. By the
above construction, the algebras $B_{n}=A_{|G_{n}|^{2}+1}$ have
$G_{n}$ gradings such that
\begin{itemize}
\item $\dim(B_{n})$ and $\gamma(G_{n})$ tend to infinity with $n$.
\item $B_{n}$ has no multilinear monomial identities of degrees smaller
then $|G_{n}|$.
\item $\exp(B_{n})=1$.
\end{itemize}
\end{example}

\begin{example}

Suppose we have a sequence of algebras $A_{n}$ with
$d_{n}=\exp(A_{n})$ monotonically increasing (i.e. to infinity).
Can we necessarily find groups $G_{n}$ and nondegenerate
$G_{n}$-gradings such that the index of any abelian subgroups
$U_{n}$ of $G_{n}$ tends to infinity? \\
The answer is negative as the algebras of upper triangular matrices show.
More precisely, let $UT_n(F)$ be the algebra of $n\times n$ upper
triangular matrices, which have exponent $\exp(UT_n(F))=n$.
By a theorem of Valenti and Zaicev \cite{VaZa},
every $G$-grading on $UT_n(F)$ is isomorphic to an elementary grading.
Unless the grading is trivial, the grading cannot be nondegenerate
since $UT_n(F)_g$ contains only upper triangular matrices with zero on the
diagonal for every $e\neq g\in G$, so $x_{g,1} \cdots x_{g,n}$ is an identity.
We conclude that the only nondegenerate grading is with the
trivial group, so in particular there are no nondegenerate grading
such that the index of the largest abelian subgroup tends to infinity.

\end{example}
\end{section}

\end{document}